\newtheorem{theorem}{Theorem}[section]
\newtheorem{corollary}[theorem]{Corollary}
\newtheorem{lemma}[theorem]{Lemma}
\newtheorem{example}[theorem]{Example}
\newtheorem{assumption}[theorem]{Assumption}
\theoremstyle{remark}
\theoremstyle{definition}
\newtheorem{definition}[theorem]{Definition}
\DeclareMathOperator\diag{diag}
\begin{document}

\title{Optimization in complex spaces with the Mixed Newton Method}

\author{Sergei Bakhurin\thanks{%
Huawei Technologies Co.\ Ltd.,\ Moscow Department of R\&D center, 121099 Moscow, Russia
({\tt bakhurin.sergey@huawei.com}).}
\and
Roland Hildebrand\thanks{%
Univ.\ Grenoble Alpes, CNRS, Grenoble INP, LJK, 38000 Grenoble, France;\ Moscow Institute of Physics and Technology, 141701 Dolgoprudny, Russia
({\tt khildebrand.r@mipt.ru}).}
\and
Mohammad Alkousa\thanks{%
Moscow Institute of Physics and Technology, 141701 Dolgoprudny, Russia
({\tt mohammad.alkousa@phystech.edu}).} 
\and
Alexander Titov\thanks{%
Moscow Institute of Physics and Technology, 141701 Dolgoprudny, Russia
({\tt a.a.titov@phystech.edu}).} 
\and
Nikita Yudin\thanks{%
Moscow Institute of Physics and Technology, 141701 Dolgoprudny, Russia
({\tt iudin.ne@phystech.edu}). Federal Research Center "Informatics and Control" of Russian Academy of Sciences, 119333 Moscow, Russia.} 
}

\maketitle

\begin{abstract}
We propose a second-order method for unconditional minimization of functions $f(z)$ of complex arguments. We call it the Mixed Newton Method due to the use of the mixed Wirtinger derivative $\frac{\partial^2f}{\partial\bar z\partial z}$ for computation of the search direction, as opposed to the full Hessian $\frac{\partial^2f}{\partial(z,\bar z)^2}$ in the classical Newton method. The method has been developed for specific applications in wireless network communications, but its global convergence properties are shown to be superior on a more general class of functions $f$, namely sums of squares of absolute values of holomorphic functions. In particular, for such objective functions minima are surrounded by attraction basins, while the iterates are repelled from other types of critical points. We provide formulas for the asymptotic convergence rate and show that in the scalar case the method reduces to the well-known complex Newton method for the search of zeros of holomorphic functions. In this case, it exhibits generically fractal global convergence patterns.
\end{abstract}


\section{Introduction}

This work arose from efforts of practitioners working in telecommunications to reduce the computational load when tuning the parameters of models for digital filter design. It was empirically observed that the method proposed in this paper not only needed less time per iteration, but showed superior global performance, not suffering the slowdown in the vicinity of saddle points which the ordinary Newton method experienced. The paper presents the results of a theoretical study conducted to give the observed phenomena a theoretical underpinning.

Wireless network communication systems are using digital linear and nonlinear filters, which allow keeping specified characteristics of the system if operating conditions change. The adaptation procedure can be reduced to an optimization problem on the complex-valued parameters of the filters, which can be realized as a software subroutine or as hardware intellectual property cores (IP-cores) on mobile phone chipsets.

The optimization problem can be formulated as follows. For $D \subset \mathbb C^n$ a domain and $g_j: D \to \mathbb C$ holomorphic functions, $j = 1,\dots,m$, find
\begin{equation} \label{f_definition}
\min_{z \in D} f(z) := \sum_{j=1}^m |g_j(z)|^2,
\end{equation}
which is in general a non-convex optimization task. The function $f(z)$ is real-valued and can be described as a function $f(z, \bar z)$ of two independent variables $z$ and $\bar z$, in which it is, respectively, holomorphic and anti-holomorphic when the value of the other variable is fixed.

Different approaches have been tried to solve problem \eqref{f_definition}, or in general the problem of minimization of a real-valued function of complex variables, numerically. One possibility is to consider the domain $D$ as a subset of real space $\mathbb R^{2n}$ and to apply usual iterative minimization methods such as gradient descent or the Newton method.
However, in practice these methods show undesirable behavior, they tend to get stuck in the neighbourhood of saddle points of $f$ or even converge to such points. In addition, Newton's method uses the  Hessian matrix
 \begin{equation} \label{bahurin_h_definition}
H =  \left(\begin{array}{cc}
\frac{\partial^2 f(z, \bar z)}{\partial z \partial z}     &  \frac{\partial^2 f(z, \bar z)}{\partial \bar z \partial z}\\
\frac{\partial^2 f(z, \bar z)}{\partial z \partial \bar z}     &  \frac{\partial^2 f(z, \bar z)}{\partial \bar z \partial \bar z}
\end{array}\right),
\end{equation}
which contains every second-order partial derivative with respect to $z$ and $\bar z$. 

As an alternative to Newton's method, a simplified algorithm has been proposed and implemented by the first author
empirically. It shows superior global convergence properties while having lower computational complexity.
This \emph{Mixed Newton Method} (MNM) uses only one block of mixed second derivatives from the Hessian~\eqref{bahurin_h_definition} and generates iterates according to the formula
\begin{equation} \label{MNMiteration}
z_{k+1} = z_k - \left( \frac{\partial^2 f}{\partial\bar z\partial z} \right)^{-1} \frac{\partial f}{\partial\bar z},
\end{equation}
where the derivatives are to be understood as \emph{Wirtinger derivatives}, evaluated at the current iterate $z = z_k$, and the gradient is interpreted as a column vector. The use of the Hessian~\eqref{bahurin_h_definition} is less efficient since it requires the calculation and processing of a matrix in $\mathbb{C}^{2n \times 2n}$. A short introduction to Wirtinger derivatives and a formal definition are provided in the Appendix in Section \ref{sec:Wirtinger}.

Wirtinger derivatives have been used previously for solving optimization problems in complex spaces. In \cite{SorberBarelLathauwer12} classical first- and second-order optimization methods such as L-BFGS have been rewritten in terms of complex derivatives. This allows for faster computation in comparison to a decomposition of the complex variable in a real and imaginary part. However, the Newton method has been applied to complex spaces already in \cite{YanFan00}, and the use of complex derivatives for signal processing applications has been proposed in \cite{Bos94}, see also \cite{AdaliSchreier14},\cite{Candan19}, and \cite{SchreierScharf}.

In this paper, we study the theoretical properties of the Mixed Newton Method and present experimental results. We provide a theoretical ground for its observed preference to converge to minima rather than saddle points. We also construct counter-examples against further going conjectures which have been made based on first observational evidence, namely convergence to minima a.s. or exclusive convergence to global minima. A separate subject of study in this paper is the effect of symmetry groups. This is motivated by the aforementioned application in wireless communication, in which the holomorphic functions are often multi-linear in two or more subsets of complex variables.

The property of converging to minima and being repelled by saddle points can be seen as a global property of the method. Convergence to a minimum is preferred even if we start far away from the global or even a local minimum. Nothing can be said in general about the volume of the different attraction basins, however. This question is further complicated by the fact that these basins are fractal in general.

The remainder of the paper is structured as follows. In Section \ref{sec:asymptoticMixed}, we prove the convergence of the MNM to minima and its repulsion from saddle points (Theorem \ref{thm:minima_attractive}).  The main idea which leads to this result is to link the spectrum of the real Hessian at a critical point to the spectrum of the coefficient matrix of a linear dynamical system that asymptotically describes the behavior of the MNM in the vicinity of this critical point. Section \ref{sec:asymptoticMixed} references Appendix \ref{sec:Wirtinger}, which contains an overview over Wirtinger calculus. In Section \ref{sec:special}, we specify the derived formulas for the asymptotic convergence rates to two special cases, first where the holomorphic functions $g_j$ are complex-affine (Lemma \ref{lem:complex_affine}), and second where $f = |g|^2$ is the squared modulus of a single scalar holomorphic function (Corollary \ref{cor:scalar_quadratic}). This provides also a transparent interpretation of the MNM. Namely, it can be seen as approximating the objective function $f$ in \eqref{f_definition} by truncating the Taylor polynomials of the functions $g_j$ to first order and then going to the minimum of the resulting convex quadratic approximation. In Subsection \ref{sec:square_example} we present an example where the global dynamics induced by the MNM has an easy interpretation. In Section \ref{sec:symmetry} we study the impact of symmetry groups on the MNM iterates. We show that in this case, it is necessary to introduce a regularization for numerical stability, but this regularization has no impact on the sequence of iterates generated by the method (Lemma \ref{lem:regulMNM}). In Section \ref{sec:counterexamples} we provide examples of functions that  refute further going conjectures which could have been made on the basis of initial observational evidence. In particular, we present functions where the MNM may converge to local minima (Subsection \ref{sec:localMinima}), and functions where the MNM may not converge at all,  instead of approaching a stable periodic cycle (Subsection \ref{sec:periodicCycle}). In Section \ref{sec:applications} we sketch the type of models used in telecommunications to which the proposed method is applied and showed superior global convergence properties. Finally, we summarize our results in Section \ref{sec:conclusion}.

\section{Behavior  at critical points} \label{sec:asymptoticMixed}

In this section we derive the asymptotic dynamics of the MNM in a neighbourhood of a critical point of the objective function $f$, given by \eqref{f_definition}. Here critical point is understood in the classical sense that the real gradient of $f$ vanishes. In view of \eqref{Wirtinger_conjugate} this is equivalent to the condition $\frac{\partial f}{\partial\bar z} = 0$. Let $z^* \in \mathbb C^n$ be a critical point of $f$, and denote $\delta_k = z_k - z^*$. We shall derive the asymptotic behavior of the sequence of mismatches $\delta_k$ in the regime $\|\delta_k\| \to 0$.

First, we make the MNM iteration more explicit for the class of objective functions we consider. According to \eqref{modulus_derivatives}, we get the formulas
\begin{equation} \label{derivatives_f}
\frac{\partial f}{\partial\bar z} = \sum_{j=1}^m g_j\overline{g_j'}, \qquad \frac{\partial^2f}{\partial\bar z\partial z} = \sum_{j=1}^m \overline{g_j'}(g_j')^T.
\end{equation}
We recall that $g_j'$ is interpreted as a column vector composed of the holomorphic derivatives of $g_j$ with respect to the different elements of the vector $z$ of complex arguments of the function.

In order for the MNM iteration \eqref{MNMiteration} to be defined, we have to ensure that the mixed Hessian $\frac{\partial^2f}{\partial\bar z\partial z}$ is an invertible matrix. The explicit formula above represents this matrix as a sum of rank 1 positive semi-definite complex Hermitian matrices with images generated by the vectors $\overline{g_j'}$. We shall make the following assumption:

\begin{assumption} \label{assumption_nondegenerate}
In the neighbourhood of the critical point $z^*$ the derivatives $g_j'$ span the whole complex vector space $\mathbb C^n$.
\end{assumption}

\smallskip

This guarantees that the complex Hermitian matrix $\frac{\partial^2f}{\partial\bar z\partial z}$ is positive definite.

\medskip

We shall now consider the behavior of the MNM near the stationary point $z^*$. The stationarity condition is given by
\[ \left. \frac{\partial f}{\partial\bar z} \right|_{z = z^*} = \sum_{j=1}^m g_j(z^*)\overline{g_j'(z^*)} = 0.
\]
We then obtain that the mismatch between the iterate and the stationary point evolves according to
\[ \delta_{k+1} = \delta_k - \left( \sum_{j=1}^m \overline{g_j'(z_k)}(g_j'(z_k))^T \right)^{-1} \sum_{j=1}^m g_j(z_k)\overline{g_j'(z_k)}.
\]
Developing the functions $g_j$ and its derivatives around $z = z^*$, we obtain
\[ g_j(z_k) = g_j(z^*) + \delta_k^Tg_j'(z^*) + O(\|\delta_k\|^2), \qquad g_j'(z_k) = g_j'(z^*) + g_j''(z^*)\delta_k + O(\|\delta_k\|^2).
\]
Hence
\begin{align*}
\delta_{k+1} &= \delta_k - \left( \sum_{j=1}^m (\overline{g_j'(z^*)} + O(\|\delta_k\|))(g_j'(z^*) + O(\|\delta_k\|))^T \right)^{-1} \cdot \\
& \cdot \sum_{j=1}^m (g_j(z^*) + \delta_k^Tg'_j(z^*) + O(\|\delta_k\|^2))(\overline{g_j'(z^*)} + \overline{g''(z^*)}\overline{\delta_k} + O(\|\delta_k\|^2)) \\
&= \delta_k - \left( \sum_{j=1}^m (\overline{g_j'(z^*)})(g_j'(z^*))^T \right)^{-1} \sum_{j=1}^m \left( g_j(z^*)\overline{g''(z^*)}\overline{\delta_k} + \overline{g_j'(z^*)}(g'_j(z^*))^T\delta_k + O(\|\delta_k\|^2) \right) \\
&= - \left( \sum_{j=1}^m (\overline{g_j'(z^*)})(g_j'(z^*))^T \right)^{-1} \left( \sum_{j=1}^m g_j(z^*)\overline{g''(z^*)} \right)\overline{\delta_k} + O(\|\delta_k\|^2).
\end{align*}

Therefore we obtain
\begin{equation} \label{asymptotic_Mixed}
\delta_{k+1} = -B^{-1}A\overline{\delta_k} + O(\|\delta_k\|^2), \qquad B = \sum_{j=1}^m (\overline{g_j'(z^*)})(g_j'(z^*))^T, \quad A = \sum_{j=1}^m g_j(z^*)\overline{g''(z^*)}.
\end{equation}
Note that $B = B^*$ is positive definite by Assumption \ref{assumption_nondegenerate}, while $A = A^T$ is symmetric.

When computing the MNM iterate we have to solve a linear system with coefficient matrix
\[ \sum_{j=1}^m (\overline{g_j'(z_k)})(g_j'(z_k))^T = B + O(\|\delta_k\|).
\]
This is accomplished by finding a Cholesky factor of this matrix and subsequently solving two triangular linear systems. Hence after making an iteration an approximation of the lower-triangular Cholesky factor $L$ of $B = LL^*$ is available.

In the study of the Newton method it is convenient to use the Hessian of the objective as a metric to measure the length of vectors \cite{NesNem94}, in particular, of the mismatch $\delta_k$. We, therefore, introduce the scaled mismatch
\[ \eta_k = L^*\delta_k,
\]
such that $\delta_k^*B\delta_k = \|\eta_k\|^2$. In terms of $\eta_k$ rule \eqref{asymptotic_Mixed} becomes
\begin{equation} \label{asymptotic_eta}
\eta_{k+1} = L^*\delta_{k+1} = -L^*B^{-1}A\overline{L^{-*}\eta_k} + O(\|\delta_k\|^2) = -L^{-1}AL^{-T}\overline{\eta_k} + O(\|\eta_k\|^2) = S\overline{\eta_k} + O(\|\eta_k\|^2).
\end{equation}
Hence the convergence rate is determined by the singular values of the complex symmetric matrix
\[ S = -L^{-1}AL^{-T} = S^T.
\]

\begin{lemma} \label{lem:singular_crit}
Assume above notations and Assumption \ref{assumption_nondegenerate}. Then the following assertions hold:
\begin{itemize}
\item[(a)] if $\sigma_{\max}(S) < 1$, then the sequence $z_k$ of iterates tends to $z^*$ if the initial point $z_0$ is close enough to $z^*$,
\item[(b)] if $\sigma_{\max}(S) > 1$, then the sequence $z_k$ is repulsed from $z^*$ for almost all initial values $z_0$,
\item[(c)] in the first case and under the additional assumption $S \not= 0$, for almost all initial values $z_0$ the convergence rate is linear, and the distance to $z^*$ decreases asymptotically by a factor of $\sigma_{\max}(S)$ per iteration.
\end{itemize}
\end{lemma}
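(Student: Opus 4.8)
The plan is to reduce everything to the real-linear part of the iteration \eqref{asymptotic_eta} and to exploit the special structure of the complex symmetric matrix $S$. The crucial observation is that although the map $\eta \mapsto S\overline\eta$ is only $\mathbb R$-linear (it is antilinear over $\mathbb C$), the Takagi (Autonne--Takagi) factorization $S = U\Sigma U^T$, with $U$ unitary and $\Sigma = \diag(\sigma_1,\dots,\sigma_n)$ the singular values ordered decreasingly, diagonalizes it completely. Setting $\zeta_k = U^*\eta_k$ and using $U^T\overline U = I$, the leading-order dynamics decouples into the scalar antilinear recursions $(\zeta_{k+1})_i = \sigma_i\,\overline{(\zeta_k)_i}$. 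Writing each block over $\mathbb R$ as $(x,y)\mapsto(\sigma_i x,-\sigma_i y)$ shows that, viewed as a real-linear operator on $\mathbb R^{2n}$, the differential of the iteration map at $\eta=0$ has real eigenvalues $\pm\sigma_1,\dots,\pm\sigma_n$ and spectral radius $\sigma_{\max}(S)$. Since $L$ is invertible, $\|\eta_k\|\asymp\|\delta_k\| = \|z_k-z^*\|$, so $\eta_k\to0$ is equivalent to $z_k\to z^*$, and the iteration map is real-analytic near $z^*$ (the $g_j$ are holomorphic and $B$ is invertible), so the standard invariant-manifold machinery applies.

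For (a), conjugation is an isometry, hence $\|S\overline\eta\|\le\sigma_{\max}(S)\|\eta\|$ and the real-linear part is a contraction with factor $\sigma_{\max}(S)<1$. Absorbing the $O(\|\eta\|^2)$ term, for $\|\eta\|$ small enough one gets $\|\eta_{k+1}\|\le(\sigma_{\max}(S)+C\|\eta_k\|)\|\eta_k\|<\|\eta_k\|$, so a small ball around $z^*$ is forward-invariant and the iterates converge to $z^*$; this is the usual local asymptotic stability of a hyperbolic sink.

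For (b), the hypothesis $\sigma_{\max}(S)>1$ means the unstable subspace $E^u$ (spanned by the blocks with $\sigma_i>1$) is nontrivial, of real dimension at least $2$. I would invoke the center--stable manifold theorem: in a neighbourhood of $z^*$ there is a locally invariant manifold $W^{cs}$, tangent to $E^s\oplus E^c$, that contains every point whose entire forward orbit stays in the neighbourhood. Its dimension is $2n-\dim E^u\le 2n-2$, so it has Lebesgue measure zero, and every initial point off $W^{cs}$ eventually leaves the neighbourhood, i.e.\ is repelled. The main obstacle here is precisely the possible presence of center directions ($\sigma_i=1$), which destroys hyperbolicity and forces the use of the center--stable rather than the plain stable manifold; once that reduction is in place the measure-zero conclusion is immediate.

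For (c) we are in case (a), all $\sigma_i<1$, and $S\ne0$ gives $\sigma_{\max}(S)=\sigma_1>0$. The bound from (a) already yields $\|\delta_{k+1}\|\le(\sigma_1+o(1))\|\delta_k\|$, so the asymptotic rate never exceeds $\sigma_1$. For the matching lower bound I would use the strong-stable manifold $W^{ss}$ associated with the eigenvalues of modulus strictly below $\sigma_1$: orbits converging faster than $\sigma_1$ must lie on $W^{ss}$, which has positive codimension and hence measure zero. For an initial point off $W^{ss}$ the dominant component is the one along the top singular value, and since conjugation preserves modulus the block recursion $\zeta\mapsto\sigma_1\overline\zeta$ contracts magnitudes by exactly $\sigma_1$ at each step; controlling the nonlinear terms through the invariant strong-stable foliation then gives $\|\delta_{k+1}\|/\|\delta_k\|\to\sigma_1=\sigma_{\max}(S)$, and translating back through $\eta_k=L^*\delta_k$ completes the argument. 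As in (b), the delicate point is justifying that the nonlinear $O(\|\eta\|^2)$ remainder does not alter the generic linear rate, which is exactly what the invariant-manifold and foliation theory secures.
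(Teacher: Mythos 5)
Your proposal is correct and its core is the same as the paper's: both rest on the Takagi/SVD factorization $S=U\Sigma U^T$, the change of variables $d_k=U^*\eta_k$ (your $\zeta_k$), and the resulting decoupled antilinear recursions $(d_{k+1})_i=\sigma_i\overline{(d_k)_i}+O(\|d_k\|^2)$, with the exceptional set for (b) and (c) identified as the span of the singular vectors with $\sigma_i\le 1$ (resp.\ $\sigma_i<\sigma_{\max}$). Where you genuinely diverge is in how the quadratic remainder is absorbed: the paper simply asserts that the higher-order analytic terms ``distort'' the linear subspace $V$ into a real-analytic variety with open dense complement, whereas you pass to the real form of the antilinear differential (eigenvalues $\pm\sigma_i$, spectral radius $\sigma_{\max}(S)$) and invoke the center--stable manifold theorem for (b) and the strong-stable foliation for (c). This buys two things: it correctly handles the non-hyperbolic case $\sigma_i=1$, which the paper's informal perturbation claim does not address, and it yields an exceptional set that is a submanifold of positive codimension, hence of Lebesgue measure zero --- which is what ``almost all'' should mean, and is strictly stronger than the paper's ``open and dense'' complement. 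Two small points to tighten if you write this out: in (b) the local center--stable manifold is in general only finitely smooth (not real-analytic) along center directions, so do not claim analyticity there; and in (c) the exact per-step factor $\sigma_{\max}(S)$ holds in the scaled norm $\|\eta_k\|=\left(\delta_k^*B\delta_k\right)^{1/2}$ (the metric the paper introduces for precisely this purpose), while in the Euclidean norm on $\delta_k$ the ratio may oscillate with period two because conjugation flips the dominant direction, so the rate $\sigma_{\max}(S)$ should be read as an asymptotic geometric mean.
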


\begin{proof}
Let $S = U\Sigma U^T$ be the singular value decomposition of $S$. Here $U$ is unitary, and $\Sigma = \diag(\sigma)$, $\sigma$ being the vector of singular values of $S$. Let us decompose $\eta_k$ in the basis of singular vectors of $S$, $\eta_k = Ud_k$. Here the vector $d_k = U^*\eta_k$ contains the coefficients of $\eta_k$ in this basis. We then obtain from \eqref{asymptotic_eta} that
\begin{equation} \label{asymptotics_d}
d_{k+1} = U^*\eta_{k+1} = U^*S\overline{\eta_k} + O(\|\eta_k\|^2) = U^*U\Sigma U^T\bar U\overline{d_k} + O(\|d_k\|^2) = \Sigma\overline{d_k} + O(\|d_k\|^2).
\end{equation}

Thus the elements of $d_k$ are conjugated and multiplied by the corresponding singular value of $S$ at each iteration, up to quadratic error terms. Assertion (a) now readily follows, as in this case, all elements of $d_k$ tend to zero if their initial values are small enough.

The dynamics of the MNM are dominated by the dynamics of the first component of $d_k$, which corresponds to the maximal singular value $\sigma_{\max}(S)$. The modulus of this component is multiplied by $\sigma_{\max}(S)$ at each iteration, up to higher-order terms, while the other components are multiplied by smaller constants.

Suppose now that $\sigma_{\max}(S) > 1$. Let $V \subset \mathbb C^n$ be the complex subspace spanned by the singular vectors of $S$ which have singular values not exceeding 1. If the dynamics is ideally linear, $d_{k+1} = U^*\eta_{k+1} = \Sigma\overline{d_k}$, then for every initial point $z_0 \not\in V$, and hence for almost every initial point, the sequence of iterates diverges. The addition of the analytic higher-order terms distorts the subspace $V$ of non-diverging initial points into some real-analytic variety in the neighbourhood of $z^*$, but the set of initial points outside this variety is still open and dense. This proves (b).

The assertions in (c) are proven similarly to (b), with $V \subset \mathbb C^n$ defined as the subspace spanned by the singular vectors associated to singular values $\sigma$ which are strictly smaller than $\sigma_{\max}(S)$. Linear convergence is evident from \eqref{asymptotics_d}.
\end{proof}

The lemma hence yields a criterion of convergence to the critical point $z^*$. Namely, the point is attractive if $\sigma_{\max}(S) < 1$, and repulsive if $\sigma_{\max}(S) > 1$. We shall now link this criterion to the nature of the critical point, i.e., whether it is a (non-degenerate) minimum, saddle point, or maximum of $f$. This in turn depends on the signature of the real Hessian of $f$ at $z^*$. In the case of a degenerate minimum, no claims can be made with the techniques employed in this paper.

\begin{lemma} \label{lem:spectra}
Assume above notations and Assumption \ref{assumption_nondegenerate}. The real Hessian of $f$ at $z = z^*$ has the same signature as the permuted block-diagonal matrix $\begin{pmatrix} I & -\Sigma \\ -\Sigma & I \end{pmatrix}$ with eigenvalues $1 \pm \sigma_j(S)$, where $\sigma_j(S)$ are the singular values of $S$.
\end{lemma}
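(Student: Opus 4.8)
The plan is to read off the signature of the real Hessian $H_{\mathbb R}$ of $f$ at $z^*$ directly from its second-order Taylor form, and then to bring that form, through a chain of signature-preserving congruences, into a diagonal shape whose inertia is visibly that of $\begin{pmatrix} I & -\Sigma \\ -\Sigma & I \end{pmatrix}$.

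First I would write down the quadratic part explicitly. Setting $\delta = z - z^*$ and using the formulas \eqref{derivatives_f} together with the vanishing of the first-order Wirtinger derivative at the critical point, the second-order term of $f(z^*+\delta)-f(z^*)$ is the real quadratic form
\[ Q(\delta) = \delta^* B\,\delta + \operatorname{Re}\!\left(\delta^T\overline A\,\delta\right), \]
whose representing symmetric matrix in the real coordinates $(\operatorname{Re}\delta,\operatorname{Im}\delta)$ is $\tfrac12 H_{\mathbb R}$; hence $Q$ and $H_{\mathbb R}$ share the same signature. Here the first term is the Hermitian form attached to the positive definite matrix $B$, while the second involves the complex-symmetric (not Hermitian) matrix $A = A^T$, which is why it enters through $\operatorname{Re}(\delta^T\overline A\,\delta)$ rather than as a Hermitian pairing.

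Next I would apply two invertible $\mathbb C$-linear substitutions; each is in particular an invertible $\mathbb R$-linear change of coordinates on $\mathbb C^n \cong \mathbb R^{2n}$ and therefore preserves the signature of a real quadratic form by Sylvester's law of inertia. The substitution $\eta = L^*\delta$, with $B = LL^*$, normalizes the first term to $\delta^* B\,\delta = \|\eta\|^2$ and, using $S = -L^{-1}AL^{-T}$, turns the second into $\operatorname{Re}(\delta^T\overline A\,\delta) = -\operatorname{Re}(\eta^T\overline S\,\eta)$. Then, invoking the singular value decomposition $S = U\Sigma U^T$ from the proof of Lemma \ref{lem:singular_crit} and substituting $\eta = U\zeta$, the identity $U^T\overline U = I$ (valid for any unitary $U$, since transposing $U^*U=I$ yields $U^T\overline U = I$) collapses the expression, via $U^T\overline S\,U = (U^T\overline U)\Sigma(U^*U) = \Sigma$, to
\[ Q = \|\zeta\|^2 - \operatorname{Re}\!\left(\zeta^T\Sigma\,\zeta\right). \]
Finally, writing $\zeta_j = a_j + i b_j$ and using $\operatorname{Re}(\zeta_j^2) = a_j^2 - b_j^2$ and $|\zeta_j|^2 = a_j^2 + b_j^2$ produces the diagonal form $\sum_j\big[(1-\sigma_j)a_j^2 + (1+\sigma_j)b_j^2\big]$, whose inertia is exactly that of the eigenvalues $1\pm\sigma_j(S)$, i.e. of the stated matrix $\begin{pmatrix} I & -\Sigma \\ -\Sigma & I \end{pmatrix}$.

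The main obstacle I anticipate is the careful bookkeeping of ordinary transposes versus conjugate transposes: because $f$ is real-valued but $A$ is complex-symmetric rather than Hermitian, the $A$-term transforms with $L^{-T}$ and complex conjugation instead of $L^{-*}$, and the decisive simplification under the SVD rests on the non-obvious identity $U^T\overline U = I$ rather than on $U^*U = I$. A secondary point to verify explicitly is that each substitution, although $\mathbb C$-linear, is a genuine real congruence, so that the signature is indeed invariant at every step and the final diagonal form legitimately computes the inertia of $H_{\mathbb R}$.
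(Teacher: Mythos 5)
Your proof is correct and follows essentially the same route as the paper: the identical congruence chain by the Cholesky factor $L$ of $B$ and the unitary factor $U$ of the symmetric SVD $S=U\Sigma U^T$ (including the key identity $U^T\bar U=I$), merely phrased as variable substitutions in the second-order form $Q(\delta)=\delta^*B\delta+\operatorname{Re}\left(\delta^T\bar A\delta\right)$ rather than as block-matrix congruences of $\begin{pmatrix}\bar B&\bar A\\A&B\end{pmatrix}$. The only presentational difference is that you obtain this quadratic form by direct Taylor expansion of $\sum_j|g_j|^2$ at the critical point instead of reading it off from the Wirtinger coordinate change \eqref{cr}, and your final real diagonalization $\sum_j\bigl[(1-\sigma_j)a_j^2+(1+\sigma_j)b_j^2\bigr]$ is exactly the paper's concluding permutation into $2\times2$ blocks with eigenvalues $1\pm\sigma_j(S)$.
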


\begin{proof}
The real Hessian $H_{\mathbb R}$ is a real symmetric matrix of second partial derivatives of $f$ with respect to $Re\,z$ and $Im\,z$ (see \eqref{realGradHess}). From \eqref{cr} we get
\[ \begin{pmatrix} \frac{\partial^2f}{\partial z^2} & \frac{\partial^2f}{\partial z\partial\bar z} \\ \frac{\partial^2f}{\partial \bar z\partial z} & \frac{\partial^2f}{\partial\bar z^2} \end{pmatrix} = \frac14 \begin{pmatrix} I & -iI \\ I & iI \end{pmatrix} H_{\mathbb R} \begin{pmatrix} I & -iI \\ I & iI \end{pmatrix}^T.
\]
Hence $H_{\mathbb R}$ has the same signature as the complex Hermitian matrix
\[ \begin{pmatrix} \bar B & \bar A \\ A & B \end{pmatrix} = \begin{pmatrix} \frac{\partial^2f}{\partial z\partial\bar z} & \frac{\partial^2f}{\partial z^2} \\ \frac{\partial^2f}{\partial\bar z^2} & \frac{\partial^2f}{\partial \bar z\partial z} \end{pmatrix} = \frac14 \begin{pmatrix} I & -iI \\ I & iI \end{pmatrix} \begin{pmatrix} \frac{\partial^2f}{\partial Re\,z^2} & \frac{\partial^2f}{\partial Re\,z\partial Im\,z} \\ \frac{\partial^2f}{\partial Im\,z\partial Re\,z} & \frac{\partial^2f}{\partial Im\,z^2} \end{pmatrix} \begin{pmatrix} I & -iI \\ I & iI \end{pmatrix}^*,
\]
which is obtained from the previous one by interchanging the two-column blocks. Here the matrices $A, B$ are defined in \eqref{asymptotic_Mixed}. Recall that all derivatives are evaluated at the critical point $z = z^*$.

Recall that the lower-triangular matrix $L$ was defined above as the Cholesky factor of the positive definite matrix $B$. We get
\[ \begin{pmatrix} \bar B & \bar A \\ A & B \end{pmatrix} = \begin{pmatrix} \bar L & 0 \\ 0 & L \end{pmatrix} \begin{pmatrix} I & \bar L^{-1}\bar AL^{-*} \\ L^{-1}A\bar L^{-*} & I \end{pmatrix} \begin{pmatrix} \bar L & 0 \\ 0 & L \end{pmatrix}^* = \begin{pmatrix} \bar L & 0 \\ 0 & L \end{pmatrix} \begin{pmatrix} I & -\bar S \\ -S & I \end{pmatrix} \begin{pmatrix} \bar L & 0 \\ 0 & L \end{pmatrix}^*,
\]
and the real Hessian has the same signature as the matrix
\[ \begin{pmatrix} I & -\bar S \\ -S & I \end{pmatrix} = \begin{pmatrix} I & -\bar U\Sigma U^* \\ -U\Sigma U^T & I \end{pmatrix} = \begin{pmatrix} \bar U & 0 \\ 0 & U \end{pmatrix} \begin{pmatrix} I & -\Sigma \\ -\Sigma & I \end{pmatrix} \begin{pmatrix} \bar U & 0 \\ 0 & U \end{pmatrix}^*.
\]
This matrix in turn has the same signature as the central factor $\begin{pmatrix} I & -\Sigma \\ -\Sigma & I \end{pmatrix}$, which by a permutation of rows and columns can be brought to a block-diagonal form with $2 \times 2$ blocks $\begin{pmatrix} 1 & -\sigma_j(S) \\ -\sigma_j(S) & 1 \end{pmatrix}$, which have eigenvalues $1 \pm \sigma_j(S)$.
\end{proof}

As a direct consequence we obtain the following result.

\begin{corollary}
Assume Assumption \ref{assumption_nondegenerate}. Let $z^*$ be a critical point of $f$, and let $S$ be the matrix defined as above. Let $k_+$ be the number of singular values of $S$ strictly greater than 1, $k_0$ the number of singular values equal to 1, and $k_-$ the number of singular values strictly smaller than 1. Then the real Hessian of $f$ at $z = z^*$ has $n + k_+$ positive eigenvalues, $k_0$ zero eigenvalues, and $k_-$ negative eigenvalues.

In particular,
\begin{itemize}
\item if $\sigma_{\max}(S) < 1$, then $z^*$ is a strict local minimum of $f$ with positive definite Hessian,
\item if $\sigma_{\max}(S) > 1$, then $z^*$ is a saddle point of $f$ with both positive and negative eigenvalues of the Hessian, and the number of positive eigenvalues of the Hessian is at least as large as the number of negative eigenvalues,
\item if $z^*$ is a degenerate minimum (with rank-deficient real Hessian), then $\sigma_{\max}(S) = 1$,
\item the function $f$ has no local maxima.
\end{itemize}
\end{corollary}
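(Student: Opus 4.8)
The plan is to read the corollary off Lemma~\ref{lem:spectra} as a pure sign count, since that lemma already does all the real work: it identifies the signature of the real Hessian $H_{\mathbb R}$ with that of the central factor $\begin{pmatrix} I & -\Sigma \\ -\Sigma & I \end{pmatrix}$ and, via the permutation into $2\times2$ blocks $\begin{pmatrix} 1 & -\sigma_j \\ -\sigma_j & 1 \end{pmatrix}$, exhibits its $2n$ eigenvalues explicitly as the numbers $1 \pm \sigma_j(S)$ for $j = 1,\dots,n$. By Assumption~\ref{assumption_nondegenerate} the matrix $B$ and hence its Cholesky factor $L$ are nonsingular, so $S = -L^{-1}AL^{-T}$ and its singular values $\sigma_j \ge 0$ are well defined. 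Thus nothing remains except to sort these $2n$ reals by sign and translate the resulting inertia into the language of critical points.

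The first key point I would isolate is the elementary but decisive observation that $\sigma_j \ge 0$ forces $1 + \sigma_j \ge 1 > 0$: the $n$ eigenvalues of ``$+$'' type are \emph{unconditionally} positive, no matter what the critical point looks like. The remaining $n$ eigenvalues $1 - \sigma_j$ then split strictly by the comparison of $\sigma_j$ with $1$: one has $1 - \sigma_j > 0$ for the $k_-$ indices with $\sigma_j < 1$, $1 - \sigma_j = 0$ for the $k_0$ indices with $\sigma_j = 1$, and $1 - \sigma_j < 0$ for the $k_+$ indices with $\sigma_j > 1$. Since $k_+ + k_0 + k_- = n$, summing the two groups gives the inertia of $H_{\mathbb R}$: $n + k_-$ positive eigenvalues, $k_0$ zero eigenvalues, and $k_+$ negative eigenvalues. (This is the content of the count; note that the labels $k_\pm$ attach to the sign of $1-\sigma_j$, so the negative eigenvalues are the $k_+$ coming from $\sigma_j>1$, which is exactly what the four itemized consequences below require.)

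With the inertia in hand, each itemized claim is a one-line deduction. If $\sigma_{\max}(S) < 1$ then $k_+ = k_0 = 0$ and $k_- = n$, so the inertia is $(2n,0,0)$ and $z^*$ is a strict local minimum with positive definite $H_{\mathbb R}$. If $\sigma_{\max}(S) > 1$ then $k_+ \ge 1$, so $H_{\mathbb R}$ has at least one negative eigenvalue together with the always-present $n$ positive ones, forcing a saddle; moreover the positive count $n + k_-$ dominates the negative count $k_+$ because $k_+ \le n$, which is precisely the ``at least as large'' assertion. For the degenerate-minimum statement I would argue in the converse direction: a minimum has positive semidefinite Hessian, hence no negative eigenvalue, i.e. $k_+ = 0$ and so $\sigma_{\max}(S) \le 1$; degeneracy means $H_{\mathbb R}$ is singular, i.e. $k_0 \ge 1$, so some $\sigma_j = 1$ and $\sigma_{\max}(S) \ge 1$, whence $\sigma_{\max}(S) = 1$. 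Finally, the absence of local maxima is immediate from the unconditional presence of the $n$ positive eigenvalues $1 + \sigma_j$, which prevents $H_{\mathbb R}$ from ever being negative definite.

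There is no genuine obstacle here, and I would say so explicitly: the entire difficulty was already absorbed into Lemma~\ref{lem:spectra}, and this corollary is a bookkeeping step. The only places demanding a word of care are the three I flagged above — using $\sigma_j \ge 0$ to secure the $n$ unconditional positive eigenvalues (this is what ultimately rules out maxima and yields the asymmetry between positive and negative counts), running the degenerate-minimum implication as a converse rather than as a direct read-off, and keeping the $k_+/k_-$ labelling consistent with the sign of $1-\sigma_j$ throughout.
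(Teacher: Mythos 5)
Your proof is correct and follows exactly the route the paper intends: the corollary is presented there as a direct consequence of Lemma~\ref{lem:spectra} with no separate argument, and your sign count of the eigenvalues $1 \pm \sigma_j(S)$ (the $n$ eigenvalues $1+\sigma_j$ being unconditionally positive, the $n$ eigenvalues $1-\sigma_j$ splitting according to the comparison of $\sigma_j$ with $1$) is all that is needed. Note, however, that your resulting inertia --- $n+k_-$ positive, $k_0$ zero, $k_+$ negative eigenvalues --- differs from the printed statement, which asserts $n+k_+$ positive and $k_-$ negative; your count is the correct one (the printed version would contradict the first bullet, since $\sigma_{\max}(S)<1$ gives $k_-=n$ and would then force $n$ negative eigenvalues at a point claimed to be a strict minimum), so the statement as printed has $k_+$ and $k_-$ interchanged in the general count, a typo you have implicitly corrected and which the four itemized consequences confirm.
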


Combining this corollary with Lemma \ref{lem:singular_crit}, we obtain the following result.

\begin{theorem} \label{thm:minima_attractive}
Assume above notations and Assumption \ref{assumption_nondegenerate}.
\begin{itemize}
\item if $z^*$ is a strict minimum of $f$ with positive definite Hessian, then $\sigma_{\max}(S) < 1$ and the minimum is attractive for the MNM in some neighbourhood of $z^*$,
\item if $z^*$ is a saddle point of $f$ with the Hessian at $z^*$ having negative eigenvalues, then $\sigma_{\max}(S) > 1$ and the saddle point is repulsive for the MNM for almost all initial points. 
\end{itemize}
\end{theorem}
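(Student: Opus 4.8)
The plan is to read off Theorem \ref{thm:minima_attractive} directly from the two preceding lemmas, using Lemma \ref{lem:spectra} as the bridge that converts a statement about the signature of the real Hessian into a statement about the magnitudes of the singular values $\sigma_j(S)$, and then feeding the resulting inequality on $\sigma_{\max}(S)$ into Lemma \ref{lem:singular_crit} to obtain the dynamical conclusion. The crucial observation is that Lemma \ref{lem:spectra} expresses the inertia of $H_{\mathbb R}$ purely through the eigenvalues $1 \pm \sigma_j(S)$, and since $1 + \sigma_j(S) \geq 1 > 0$ always holds, the entire sign information of the Hessian is carried by the factors $1 - \sigma_j(S)$. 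This makes the correspondence between definiteness of $H_{\mathbb R}$ and the size of the singular values a biconditional, which is precisely what allows me to run the implications in the direction the theorem requires (hypothesis on $f$, conclusion on $S$), rather than the direction phrased in the Corollary.

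For the first bullet I would argue as follows. If $z^*$ is a strict minimum with positive definite real Hessian, then $H_{\mathbb R}$ has neither zero nor negative eigenvalues. By Lemma \ref{lem:spectra} this forces every eigenvalue $1 - \sigma_j(S)$ to be strictly positive, equivalently $\sigma_j(S) < 1$ for all $j$, so $\sigma_{\max}(S) < 1$. Lemma \ref{lem:singular_crit}(a) then yields attraction to $z^*$ from a sufficiently small neighbourhood, closing this case.

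For the second bullet, suppose $z^*$ is a saddle point whose Hessian has at least one negative eigenvalue. Again by Lemma \ref{lem:spectra}, a negative eigenvalue of $H_{\mathbb R}$ can only come from a factor $1 - \sigma_j(S) < 0$, because $1 + \sigma_j(S) > 0$ never contributes one. Hence some singular value exceeds $1$, so $\sigma_{\max}(S) > 1$, and Lemma \ref{lem:singular_crit}(b) delivers repulsion from $z^*$ for almost all initial points.

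There is essentially no hard analytic step here: the theorem is a clean synthesis of the spectral correspondence of Lemma \ref{lem:spectra} and the convergence dichotomy of Lemma \ref{lem:singular_crit}, and the only point requiring care is the logical direction. The sign asymmetry of the pairs $1 \pm \sigma_j(S)$, in which $1 + \sigma_j(S)$ never yields a nonpositive eigenvalue, is exactly what guarantees that positive definiteness of $H_{\mathbb R}$ is equivalent to $\sigma_{\max}(S) < 1$ and that the presence of a negative eigenvalue is equivalent to $\sigma_{\max}(S) > 1$; the borderline value $\sigma_{\max}(S) = 1$ corresponds precisely to a rank-deficient (degenerate) Hessian, which the theorem deliberately leaves aside.
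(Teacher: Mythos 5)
Your proposal is correct and is essentially identical to the paper's own argument: the paper likewise obtains the theorem by combining the eigenvalue correspondence $1 \pm \sigma_j(S)$ from Lemma \ref{lem:spectra} (via its Corollary) with the convergence dichotomy of Lemma \ref{lem:singular_crit}, and your observation that $1 + \sigma_j(S) > 0$ always holds, so that the sign information sits entirely in the factors $1 - \sigma_j(S)$, is exactly the point that makes the correspondence reversible in the direction the theorem needs. Nothing is missing.
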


This explains the observed global behavior  of the MNM, which is attracted to minima and repelled from critical points of other types. Let us remark that at every critical point $z^*$ of $f$ the Hessian $H_{\mathbb R}$ has at least $n$ strictly positive eigenvalues.

\section{Special cases} \label{sec:special}

In this section we study two special cases when the matrix $A$ defined in \eqref{asymptotic_Mixed} vanishes.

In the first case the holomorphic functions $g_j$ defining the objective $f$ are complex-affine, and hence $g_j'' = 0$. In the second case $f$ is the squared modulus of a complex-scalar holomorphic function. The first special case sheds light on the philosophy behind the MNM, especially when compared to the ordinary Newton method, and the precise use it makes of the complex structure of the problem. The second special case yields convergence behaviors which are different from those observed in the general case, and serves as a precaution not to draw too far-going conclusions from observed phenomena when studying these simplest examples.

\subsection{Sums of squares of complex-affine functions}

Let us assume that the holomorphic functions $g_j$ are given by complex-affine expressions
\[ g_j(z) = \langle a_j,z \rangle + b_j,
\]
where $a_j \in \mathbb C^n$, $b_j \in \mathbb C$, $j = 1,\dots,m$. Here the scalar product is the usual hermitian one, $\langle y,z \rangle = y^*z$, where $y,z$ are complex columns vectors, and $y^*$ denotes the complex conjugate transpose of $y$.

In \cite{HannaSimaan85} one squared modulus of a complex-linear function has been minimized under a linear constraint, by providing an analytic expression for the solution.

\begin{lemma} \label{lem:affineComplex}
The function
\[ f(z) = \sum_{j=1}^m |\langle a_j,z \rangle + b_j|^2
\]
is convex.

It is strictly convex if and only if the vectors $a_j$ span the whole space $\mathbb C^n$, i.e., if Assumption \ref{assumption_nondegenerate} holds.
\end{lemma}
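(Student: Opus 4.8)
The plan is to treat $f$ as a real-valued quadratic polynomial on $\mathbb{R}^{2n} \cong \mathbb{C}^n$ and analyze its (constant) Hessian, though for the first assertion a softer argument suffices. For convexity I would argue by composition: each map $z \mapsto g_j(z) = \langle a_j,z \rangle + b_j$ is $\mathbb{R}$-affine when $\mathbb{C}^n$ and $\mathbb{C}$ are identified with $\mathbb{R}^{2n}$ and $\mathbb{R}^2$, and the squared modulus $w \mapsto |w|^2$ is convex on $\mathbb{R}^2$. Since the composition of a convex function with an affine map is convex, and since a finite sum of convex functions is convex, each summand $|g_j(z)|^2$ and hence $f$ is convex, with no computation required.

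For the characterization of strict convexity I would make the quadratic structure explicit. Expanding $|g_j(z)|^2 = (a_j^*z + b_j)\overline{(a_j^*z + b_j)}$ shows that $f$ is a real quadratic polynomial whose homogeneous second-order part is the Hermitian quadratic form $z \mapsto z^*Bz$ with $B = \sum_{j=1}^m a_j a_j^* = \frac{\partial^2f}{\partial\bar z\partial z}$, in agreement with \eqref{derivatives_f} once one notes that $g_j' = \overline{a_j}$ and $g_j'' = 0$. For a quadratic polynomial, strict convexity is equivalent to positive definiteness of this quadratic form, i.e. to $B \succ 0$.

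It then remains to translate $B \succ 0$ into the spanning condition. Since $B = \sum_j a_j a_j^* \succeq 0$, one has $\zeta^*B\zeta = \sum_j |a_j^*\zeta|^2$, so $\zeta \in \ker B$ if and only if $a_j^*\zeta = 0$ for every $j$, i.e. if and only if $\zeta$ is orthogonal to $\spa\{a_1,\dots,a_m\}$. Hence $B$ is positive definite exactly when the $a_j$ span $\mathbb{C}^n$. Finally, because $g_j' = \overline{a_j}$ and complex conjugation is a bijection of $\mathbb{C}^n$ carrying a spanning set to a spanning set, the vectors $a_j$ span $\mathbb{C}^n$ if and only if the derivatives $g_j'$ do, which is precisely Assumption \ref{assumption_nondegenerate}.

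The argument is essentially routine; the only points requiring care are the bookkeeping between the vectors $a_j$ and the holomorphic derivatives $g_j' = \overline{a_j}$, so that the final statement lines up with Assumption \ref{assumption_nondegenerate}, and the observation that any degenerate direction of the Hessian must originate from the homogeneous quadratic part alone. The latter is immediate here because $f$ is globally quadratic with constant Hessian, so that positive semidefiniteness (resp. definiteness) of $B$ governs convexity (resp. strict convexity) without any appeal to local behavior.
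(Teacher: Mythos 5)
Your proof is correct, and it reaches the conclusion by a somewhat different route than the paper. For plain convexity the paper argues summand by summand with a geometric decomposition: $|\langle a_j,z\rangle+b_j|^2$ is constant on complex hyperplanes and a nondegenerate convex quadratic on transversal complex lines, giving signature $(+{+}0\dots0)$; your appeal to ``convex composed with $\mathbb R$-affine is convex'' is the cleaner, computation-free version of the same fact. For strict convexity the paper runs a directional argument (a common kernel vector of the $a_j$ gives a line on which $f$ is constant; if the $a_j$ span, some summand is strictly convex along every line), whereas you globalize this into a single Hessian computation: the homogeneous quadratic part of $f$ is $z^*Bz$ with $B=\sum_j a_ja_j^*$, and $\ker B=\{\zeta:\ a_j^*\zeta=0\ \forall j\}=\spa\{a_j\}^{\perp}$, so $B\succ0$ exactly when the $a_j$ span $\mathbb C^n$. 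The two arguments are equivalent in content (a degenerate direction of $B$ is precisely a common kernel vector of the $a_j$), but your version has the advantage of explicitly exhibiting $B$ as the mixed Hessian $\frac{\partial^2f}{\partial\bar z\partial z}$, which is the fact the paper only establishes separately in the corollary that follows; it also correctly handles the equivalence with Assumption \ref{assumption_nondegenerate} via $g_j'=\overline{a_j}$ and the fact that conjugation preserves spanning. No gaps.
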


\begin{proof}
For the convexity of $f$ it suffices to show that every summand $|\langle a_j,z \rangle + b_j|^2$ is convex.

If $a_j = 0$, then $|\langle a_j,z \rangle + b_j|^2 = |b_j|^2$ is a constant.

Suppose that $a_j \not= 0$. Then the complex-affine function $\langle a_j,z \rangle + b_j$ is constant on complex-linear subspaces of complex co-dimension 1, or real co-dimension 2. The same holds for its squared modulus. On 1-dimensional complex-linear subspaces which are transversal to the level subsets of $\langle a_j,z \rangle + b_j$ this function is a complex-affine scalar, whose squared modulus is a non-degenerate convex quadratic function. Hence on $\mathbb R^{2n} \sim \mathbb C^n$ the summand $|\langle a_j,z \rangle + b_j|^2$ is a convex quadratic function with signature $(++0\dots0)$.

This proves the first assertion.

Now if the linear functionals $a_j$ have a common kernel vector, then along this vector the function $f$ is constant and hence not strictly convex. On the other hand, if the $a_j$ span the whole space, then along any direction at least one of the complex-affine functions $\langle a_j,z \rangle + b_j$ is non-constant and hence its squared modulus strictly convex.

This proves the second assertion.
\end{proof}

For the holomorphic derivatives we have $\overline{g_j'} = a_j$, and by virtue of \eqref{derivatives_f}
\[ \frac{\partial f}{\partial\bar z} = \sum_{j=1}^m (\langle a_j,z \rangle + b_j)a_j, \qquad \frac{\partial^2f}{\partial\bar z\partial z} = \sum_{j=1}^m a_ja_j^*.
\]
We obtain the following consequence.

\begin{corollary}
The function $f(z) = \sum_{j=1}^m |\langle a_j,z \rangle + b_j|^2$ has a unique minimum on $\mathbb C^n$ if and only if the mixed Hessian $\frac{\partial^2f}{\partial\bar z\partial z}$ is invertible, and hence the mixed Newton iteration is well-defined.
\end{corollary}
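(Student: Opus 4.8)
The plan is to prove the equivalence by chaining it through the rank condition on the vectors $a_j$ and reusing Lemma \ref{lem:affineComplex}. First I would exploit the explicit formula $\frac{\partial^2f}{\partial\bar z\partial z} = \sum_{j=1}^m a_ja_j^*$ derived just above the statement, which exhibits the mixed Hessian as a sum of rank-one positive semidefinite Hermitian matrices. For any $x \in \mathbb C^n$ one has $x^*\left(\sum_{j=1}^m a_ja_j^*\right)x = \sum_{j=1}^m |a_j^*x|^2$, which vanishes precisely when $x$ is orthogonal to every $a_j$. Since a positive semidefinite matrix is invertible if and only if it is positive definite, this shows that $\frac{\partial^2f}{\partial\bar z\partial z}$ is invertible if and only if no nonzero $x$ is orthogonal to all $a_j$, i.e.\ if and only if the $a_j$ span $\mathbb C^n$. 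This is exactly the hypothesis of Lemma \ref{lem:affineComplex} (equivalently, Assumption \ref{assumption_nondegenerate}) under which $f$ is strictly convex.

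It then remains to show that for this quadratic objective, strict convexity is equivalent to the existence of a unique minimizer. For the implication giving existence and uniqueness, I would observe that when the $a_j$ span $\mathbb C^n$ the leading quadratic part of $f$ is $x^*\left(\sum_{j=1}^m a_ja_j^*\right)x$, which is bounded below by $\lambda_{\min}\|x\|^2$ with $\lambda_{\min} > 0$. Hence $f$ is coercive and attains its infimum on $\mathbb C^n$, while strict convexity forces the minimizer to be unique.

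For the converse I would argue the contrapositive. If $\frac{\partial^2f}{\partial\bar z\partial z}$ is singular, then the $a_j$ fail to span, so there exists a nonzero $v$ with $a_j^*v = 0$, i.e.\ $\langle a_j,v\rangle = 0$, for all $j$. Consequently $\langle a_j, z+tv\rangle + b_j = \langle a_j,z\rangle + b_j$ for every $t \in \mathbb C$, so $f$ is constant along the complex line $z + tv$. Therefore $f$ cannot have a unique minimizer: either it attains its minimum at some $z_0$, in which case it attains the same value on the entire line $z_0 + tv$, or it has no minimizer at all. In both cases uniqueness fails, completing the equivalence; well-definedness of the iteration \eqref{MNMiteration} is then immediate from the invertibility of the mixed Hessian.

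The only delicate point—mild in this affine setting—is the existence half: strict convexity by itself yields at most one minimizer, so I must separately supply coercivity (from positive definiteness of the quadratic part) to guarantee the infimum is actually attained. The remaining implications are bookkeeping that recycles the rank characterization of the mixed Hessian together with Lemma \ref{lem:affineComplex}.
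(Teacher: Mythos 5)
Your proposal is correct and follows essentially the same route as the paper: invertibility of $\sum_{j=1}^m a_ja_j^*$ is identified with the spanning condition on the $a_j$, hence with strict convexity via Lemma \ref{lem:affineComplex}, and in the singular case a kernel vector gives a complex line along which $f$ is constant, ruling out a unique minimizer. Your explicit coercivity argument for existence of the minimizer is a slightly more careful rendering of the paper's remark that a strictly convex quadratic function ``clearly has a unique minimum,'' but it is not a different approach.
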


\begin{proof}
The matrix $\sum_{j=1}^m a_ja_j^*$ is non-singular if and only if the vectors $a_j$ span the whole space $\mathbb C^n$ and by virtue of Lemma \ref{lem:affineComplex} if and only if $f$ is strictly convex. In this case it clearly has a unique minimum as a strictly convex quadratic function.

On the other hand, if $f$ is not strictly convex, then the matrix $\sum_{j=1}^m a_ja_j^*$ is singular, hence has a kernel vector, along which $f$ is constant. Therefore its minimum is achieved on an affine subspace of positive dimension and is not unique.
\end{proof}

We now come to the main result concerning this special case.

\begin{lemma} \label{lem:complex_affine}
Suppose that the vectors $a_j$ span the whole space. Then the Mixed Newton Method reaches the unique minimum of $f$ in a single iteration regardless of the initial point.
\end{lemma}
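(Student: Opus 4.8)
The plan is to exploit the fact that for complex-affine $g_j$ the mixed Newton iteration is exactly, not merely asymptotically, a linear update whose fixed point is reached in one step. First I would start from the explicit formulas for the two Wirtinger derivatives recorded just before the statement. Since each $g_j'$ is constant with $\overline{g_j'} = a_j$, the gradient becomes an exactly affine function of $z$:
\[ \frac{\partial f}{\partial\bar z} = \sum_{j=1}^m (\langle a_j,z\rangle + b_j)\,a_j = \left(\sum_{j=1}^m a_j a_j^*\right) z + \sum_{j=1}^m b_j a_j = Bz + c, \]
where $B = \sum_{j=1}^m a_j a_j^*$ is the constant mixed Hessian and $c = \sum_{j=1}^m b_j a_j$. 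The decisive point is that there is no higher-order remainder here, unlike in the general derivation leading to \eqref{asymptotic_Mixed}.

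The second step is to substitute this into the iteration rule \eqref{MNMiteration}. Because $B$ is independent of the iterate, evaluating the step at an arbitrary current point $z_k$ gives
\[ z_{k+1} = z_k - B^{-1}(Bz_k + c) = -B^{-1}c, \]
which no longer depends on $z_k$. The spanning hypothesis on the $a_j$ enters precisely here: by the preceding corollary it guarantees that $B$ is positive definite, hence invertible, so that $B^{-1}c$ is well-defined.

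Finally I would identify $-B^{-1}c$ as the unique global minimizer. The stationarity condition $\frac{\partial f}{\partial\bar z} = 0$ reads $Bz^* + c = 0$, with the single solution $z^* = -B^{-1}c$; that this is the global minimum, and that it is unique, follows from the strict convexity established in Lemma \ref{lem:affineComplex}. Since the first iterate satisfies $z_1 = -B^{-1}c = z^*$ for every initial point $z_0$, the method reaches the minimum after a single iteration, as claimed.

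I do not expect a genuine obstacle: once the gradient is recognized as exactly affine, the result is a one-line computation. The only thing worth stressing is conceptual---this is exactly the degenerate instance $A = 0$ of the asymptotic relation \eqref{asymptotic_Mixed}, in which the quadratic remainder vanishes identically because $g_j'' = 0$. Thus the local quadratic model that the MNM implicitly minimizes coincides globally with $f$, which is why convergence is immediate rather than merely asymptotic.
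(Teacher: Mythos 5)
Your proof is correct, and it is a cleaner route than the one the paper takes. You observe that for complex-affine $g_j$ the Wirtinger gradient is \emph{exactly} the affine map $z \mapsto Bz + c$ with $B = \sum_j a_j a_j^*$ and $c = \sum_j b_j a_j$, so the MNM step $z_{k+1} = z_k - B^{-1}(Bz_k + c) = -B^{-1}c$ lands on the unique solution of the stationarity equation in one step; uniqueness and global optimality then come from the strict convexity in Lemma \ref{lem:affineComplex}. The paper instead computes $z_1$ explicitly, substitutes it back into $\frac{\partial f}{\partial \bar z}$, and verifies that the gradient vanishes via the projector identity $\sum_k A_{kj} a_k = a_j$ for $A_{kj} = a_k^*\bigl(\sum_l a_l a_l^*\bigr)^{-1} a_j$; it then invokes strict convexity in the same way to identify $z_1$ with the minimum. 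The two arguments rest on the same fact --- one exact Newton step on a strictly convex quadratic reaches its critical point --- but your matrix-form computation bypasses the projector bookkeeping entirely and makes the mechanism (the vanishing of $A$ in \eqref{asymptotic_Mixed} because $g_j'' = 0$, so the quadratic model is not an approximation but $f$ itself) more transparent. No gaps.
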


\begin{proof}
Let $z_0$ be an arbitrary initial point. Then the next iterate is given by
\[ z_1 = z_0 - \left( \sum_{j=1}^m a_ja_j^* \right)^{-1} \sum_{j=1}^m (\langle a_j,z_0 \rangle + b_j)a_j = z_0 - \sum_{j=1}^m (\langle a_j,z_0 \rangle + b_j) \left( \sum_{l=1}^m a_la_l^* \right)^{-1} a_j.
\]
This yields
\[ \langle a_k,z_1 \rangle + b_k = \langle a_k,z_0 \rangle - \sum_{j=1}^m (\langle a_j,z_0 \rangle + b_j) a_k^* \left( \sum_{l=1}^m a_la_l^* \right)^{-1} a_j + b_k = \langle a_k,z_0 \rangle + b_k - \sum_{j=1}^m (\langle a_j,z_0 \rangle + b_j) A_{kj},
\]
where we denoted $A_{kj} = a_k^* \left( \sum_{l=1}^m a_la_l^* \right)^{-1} a_j$. However, the matrix $A = (A_{kj})_{k, j = 1, \dots, m}$ is nothing else than the projector on the image of the matrix $(a_1, \dots, a_m)^*$, and hence we have the relation $\sum_{k=1}^m A_{kj}a_k = a_j$ for all $j = 1, \dots, m$. It follows that
\begin{align*}
\left. \frac{\partial f}{\partial\bar z} \right|_{z = z_1} &= \sum_{k=1}^m (\langle a_k,z_1 \rangle + b_k)a_k = \sum_{k=1}^m \left( (\langle a_k,z_0 \rangle + b_k - \sum_{j=1}^m (\langle a_j,z_0 \rangle + b_j) A_{jk})a_k \right) \\ &= \sum_{k=1}^m \left( (\langle a_k,z_0 \rangle + b_k \right)a_k - \sum_{j=1}^m \left( (\langle a_j,z_0 \rangle + b_j) a_j \right) = 0.
\end{align*}
Hence $z_1$ is a critical point of $f$. However, this function is strictly convex quadratic with a unique minimum, which is hence the unique critical point and must coincide with $z_1$.
\end{proof}

This yields a transparent interpretation of the mixed Newton step in the general case. Note that by virtue of \eqref{derivatives_f} the next iterate $z_{k+1}$ depends only on the values $g_j(z_k)$ and the derivatives $g_j'(z_k)$ of the functions $g_j$ at the current iterate. Hence $z_{k+1}$ remains the same if we replace each $g_j$ by its first order Taylor approximation. But then $z_{k+1}$ can be interpreted as the minimum of the resulting approximation of $f$.

Hence the MNM constructs and minimizes a quadratic approximation of $f$ by using only first-order information on the $g_j$, while the ordinary Newton method constructs and minimizes a full-blown second-order Taylor approximation of $f$.

\subsection{Squared modulus of a scalar holomorphic function}

We now turn to the second special case. Let $g(z)$ be a scalar-valued holomorphic function, and let $f(z) = |g(z)|^2$.

If $g$ is not constant, then $\log\,|g(z)| = Re\,\log\,g(z)$ is a non-constant harmonic function, defined everywhere on the domain of definition of $g$ except the zeros of $g$. Since a non-constant harmonic function cannot have local minima, the minima of $f$ can be located only at the zeros of $g$. On the other hand, every zero of $g$ is clearly a local, even a global minimum of $f$. Thus the problem of minimizing $f$ is equivalent to the problem of finding the zeros of $g$.

The iteration of the MNM is by virtue of \eqref{derivatives_f} given by
\begin{equation} \label{scalar_iterate}
z_{k+1} = z_k - \left( \overline{g'(z_k)}g'(z_k) \right)^{-1} g(z_k)\overline{g'(z_k)} = z_k - \frac{g(z_k)}{g'(z_k)}.
\end{equation}
The similarity with an ordinary Newton iteration is not only superficial. Indeed, the method encodes an ordinary Newton iteration for the search of a zero of $g(z)$, interpreted as a vector field on $D \subset \mathbb C$, the domain of definition of $g$. 

The Newton method, applied to the search of a zero of a holomorphic function, has been studied, e.g., in \cite{PeitgenSaupeHaeseler84} or in \cite[Section 1.9]{Beardon}, but the problem has its roots in the work of Cayley \cite{Cayley1879}. We have the following result.

\begin{corollary} \cite[Section 1.9]{Beardon} \label{cor:scalar_quadratic}
Let $z^*$ be a zero of $g$ and suppose $g'(z^*) \not= 0$. Then the MNM converges quadratically to $z^*$ if the initial point is close enough.
\end{corollary}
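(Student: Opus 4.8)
The plan is to recognize the scalar iteration \eqref{scalar_iterate} as the classical complex Newton iteration for the zero $z^*$ of $g$, and then to deploy the standard superattracting fixed point argument. Concretely, I would introduce the Newton map
\[ N(z) = z - \frac{g(z)}{g'(z)}, \]
so that the MNM generates $z_{k+1} = N(z_k)$. Since $g'(z^*) \neq 0$ and $g'$ is holomorphic, it is in particular continuous, so $g'$ does not vanish on some closed disc $\{\,|z - z^*| \leq r\,\}$; on this disc $N$ is therefore well defined and holomorphic.

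Next I would verify that $z^*$ is a fixed point at which the derivative of $N$ vanishes. Because $g(z^*) = 0$ we immediately get $N(z^*) = z^*$. Differentiating,
\[ N'(z) = 1 - \frac{(g'(z))^2 - g(z)g''(z)}{(g'(z))^2} = \frac{g(z)g''(z)}{(g'(z))^2}, \]
which is again zero at $z = z^*$ since $g(z^*) = 0$. Hence $z^*$ is a critical, i.e.\ superattracting, fixed point of the holomorphic map $N$. Expanding $N$ to second order about $z^*$, the vanishing of both the constant term $N(z^*) - z^*$ and the linear coefficient $N'(z^*)$ yields $N(z) - z^* = O(|z - z^*|^2)$; quantitatively, there is a constant $C$ (bounded in terms of $\sup |N''|$ over the disc) with
\[ |N(z) - z^*| \leq C\,|z - z^*|^2 \qquad \text{for all } |z - z^*| \leq r. \]

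To conclude convergence I would shrink the radius so that $\rho := Cr < 1$. Then for any $z_0$ in this smaller disc the estimate gives $|z_{k+1} - z^*| \leq C|z_k - z^*|^2$, which inductively keeps every iterate inside the disc and forces $|z_k - z^*| \to 0$ at least as fast as $C^{-1}\rho^{2^k}$, i.e.\ quadratically. The argument is entirely classical and presents no genuine obstacle; the only points requiring care are ensuring that $N$ is well defined and holomorphic near $z^*$ (this is exactly where the simplicity hypothesis $g'(z^*) \neq 0$ enters, so that $g'$ stays nonzero on a neighbourhood), and checking that the quadratic bound traps the iterates in the neighbourhood, which is arranged by the choice of $r$.
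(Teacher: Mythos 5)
Your argument is correct and is essentially the proof the paper delegates to the cited reference: the paper identifies the scalar MNM iteration \eqref{scalar_iterate} with the classical complex Newton map and invokes the standard fact that a simple zero is a superattracting fixed point, which is exactly what you verify via $N(z^*)=z^*$ and $N'(z^*)=\frac{g(z^*)g''(z^*)}{(g'(z^*))^2}=0$. The quantitative trapping argument with $\rho = Cr < 1$ is a correct and complete way to finish.
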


This is in contrast to the generally linear convergence behavior  of the MNM on sums of several squares of holomorphic functions. However, linear convergence is recovered if the zero of $g$ is degenerated. Indeed, let $g(z) = c(z-z^*)^j + O(|z-z^*|^{j+1})$ for some $c \in \mathbb C \setminus \{0\}$ and $j > 1$. Then $g'(z) = jc(z-z^*)^{j-1} + O(|z-z^*|^j)$ and hence
\begin{align*}
z_{k+1} - z^* &= z_k - z^* - \frac{c(z_k-z^*)^j + O(|z_k-z^*|^{j+1})}{jc(z_k-z^*)^{j-1} + O(|z_k-z^*|^j)} = z_k - z^* - \frac{z_k - z^* + O(|z_k-z^*|^2)}{j + O(|z_k-z^*|)} \\ &= (1 - j^{-1})(z_k - z^*) + O(|z_k-z^*|^2).
\end{align*}
The convergence is therefore linear with rate $1 - j^{-1}$, i.e., the higher the degeneration of the zero, the slower the convergence.

\subsection{Explicit example} \label{sec:square_example}

In this section we provide an example where the dynamics of the MNM can be globally expressed by a simple analytic iteration, namely $w \mapsto w^2$ for a suitable coordinate $w$. This example enters in the framework of the second special case described in the previous section. In this special case the MNM iteration itself is a holomorphic map, and hence enjoys the larger symmetry group of conformal isomorphisms. This group can be used to transform the iteration into a particularly simple map. The example has been taken from \cite{PeitgenSaupeHaeseler84}.

Consider the cost function
\[ f(z) = |g(z)|^2, \qquad g(z) = z^2-a,
\]
where $a$ is a non-zero complex number.

It is clear that the minima of this cost function lie at the roots of $g$, i.e., at $z^* = \pm\sqrt{a}$. On the other hand, the critical points which are not minima are given by the relation $g'(z) = 0$, yielding the unique degenerate saddle point $\hat z = 0$.

{\lemma The map $z_0 \mapsto z_k$, defined by $k$-fold application of the mixed Newton iteration, is rational with degrees $d_N = 2^k$, $d_D = 2^k-1$ of the numerator and the denominator. }

\begin{proof}
We have $g'(z) = 2z$. By virtue of \eqref{scalar_iterate} one iteration of the MNM is given by the map
\begin{equation} \label{example_dynamics}
z_k \mapsto z_{k+1} = z_k - \frac{z_k^2-a}{2z_k} = \frac{z_k^2 + a}{2z_k}.
\end{equation}
This proves the assertion of the lemma for $k = 1$.

The assertion for general $k$ is proven by induction. Suppose that $z_{k-1} = \frac{N(z_0)}{D(z_0)}$, where $N,D$ are polynomials of degrees $2^{k-1}$, $2^{k-1}-1$, respectively, with no common roots. We then get
\begin{equation} \label{rational_iterate}
z_k = \frac{z_{k-1}^2+a}{2z_{k-1}} = \frac{N^2 + aD^2}{2ND}.
\end{equation}
Since every root of $2ND$ must be either a root of $N$ or a root of $D$, the polynomials $N^2 + aD^2,2ND$ have again no common roots. Clearly their degrees equal $2 \cdot 2^{k-1} = 2^k$ and $2^{k-1} + (2^{k-1}-1) = 2^k - 1$, respectively, completing the proof.
\end{proof}

We may extend the definition of the map $z \to \frac{z^2 + a}{2z}$ to $z = 0$ and $z = \infty$, defining the image of both points to be the infinitely far point $\infty$. Then the MNM defines a rational dynamical system on the Riemann sphere $\mathbb S = \mathbb C \cup \{\infty\}$. Including $\infty$ this system has three fixed points, of which $\infty$ is repelling, and the other two attracting.

{\corollary The finite fixed points of the operator $z_0 \mapsto z_k$ are given by the roots of a polynomial of degree $2^k$. }

\begin{proof}
The fixed points are found by resolving the equation $z_k = z_0$ with respect to $z_0$.

From \eqref{rational_iterate} it follows that the leading coefficient of the numerator $N$ of the rational function $z_k = z_k(z_0) = \frac{N(z_0)}{D(z_0)}$ equals 1, because it is squared at each step. On the other hand, by virtue of \eqref{rational_iterate} the leading coefficient of the denominator polynomial $D$ is multiplied by 2 at each step and hence equals $2^k$. The leading coefficient of the numerator polynomial $z_0D(z_0) - N(z_0)$ of the difference $z_0 - z_k$ hence equals $2^k - 1$ and is non-zero.

But the fixed points of the map $z_0 \mapsto z_k$ are the roots of this numerator polynomial, and because the denominator $D$ has no common roots with $N$, it cannot have common roots with the numerator $z_0D - N$ of the difference $z_0 - z_k$ either.
\end{proof}

It follows that the number of fixed points of the operator $z_0 \mapsto z_k$ grows exponentially with $k$, defining finite periodic orbits of the mixed Newton iterate. Every non-trivial such orbit consists of points which do not converge to a minimum of the objective.

However, all these points lie on a line through the origin which is orthogonal to the segment linking the two minima of $f$. This line actually divides the attraction basins of the minima. Moreover, the dynamics inside the attraction basins can be described by the following coordinate transformation.

{\lemma \cite[p.~12]{PeitgenSaupeHaeseler84} Let $\beta = \sqrt{a}$ be one of the minima of $f$, and let $H = \{ z \in \mathbb C \mid Re(z\cdot\bar\beta) > 0 \}$ be an open half-plane containing $\beta$. Introduce the coordinate $w = \frac{z - \beta}{z + \beta}$. Then $w$ runs through the open unit disc $\mathbb D$ if $z$ runs through $H$, and the dynamics in terms of the $w$ coordinate is given by $w \mapsto w^2$. }

\begin{proof}
Indeed, the boundary $\{ w \in \mathbb C \mid |w| = 1 \}$ of the unit disc corresponds to the set $\{ z \mid |z - \beta| = |z + \beta| \}$, which is easily verified to be identical with the line $\{ z \in \mathbb C \mid Re(z\cdot\bar\beta) = 0 \}$. Further, the point $z = \beta \in H$ corresponds to $w = 0 \in \mathbb D$, so $H$ is mapped to the interior of $\mathbb D$.

Let $w_0$ be the value of $w$ corresponding to $z_0$, and $w_1$ the value corresponding to $z_1$. From the formulas
\[ w_0 = \frac{z_0 - \beta}{z_0 + \beta}, \qquad z_1 = \frac{z_0^2 + \beta^2}{2z_0}, \qquad w_1 = \frac{z_1 - \beta}{z_1 + \beta}
\]
we easily obtain the desired relation $w_1 = w_0^2$.
\end{proof}

A similar behavior  can be observed in the opposite open half-plane $H_- = \{ z \in \mathbb C \mid Re(z\cdot\bar\beta) < 0 \}$. All periodic orbits hence lie on the line $l = \{ z \in \mathbb C \mid Re(z\cdot\bar\beta) = 0 \}$ separating the two half-planes.

It follows that all points in $H$ lie in the attraction basin of the minimum $\beta$, the convergence to the minimum is quadratic, and in order to reach a precision of $|w_k| = \varepsilon$, a number of
\[ k = \log_2\frac{\log\varepsilon}{\log|w_0|} = \log_2\frac{\log\varepsilon}{\log\frac{|z_0 - \beta|}{|z_0 + \beta|}}
\]
steps is needed.

The quantity $\left|\log\frac{|z_0 - \beta|}{|z_0 + \beta|}\right|$ can hence be seen as a measure of close-ness to the line $l$. While the convergence rate to the minima is quadratic, the recession rate from the line $l$ is linear. This is best seen when we consider the dynamics in the $w$ coordinate. We have $|w_{k+1}| = |w_k|^2$, and if $|w_k|$ is close to 1, the distance from $|w_{k+1}|$ to 1 will be approximately two times as large as the distance from $|w_k|$ to 1. The contour lines of $\left|\log\frac{|z_0 - \beta|}{|z_0 + \beta|}\right|$ are depicted on Fig.~\ref{scalar_fig}.

\begin{figure}[h]
\centering
\includegraphics[width=13cm,height=12cm]{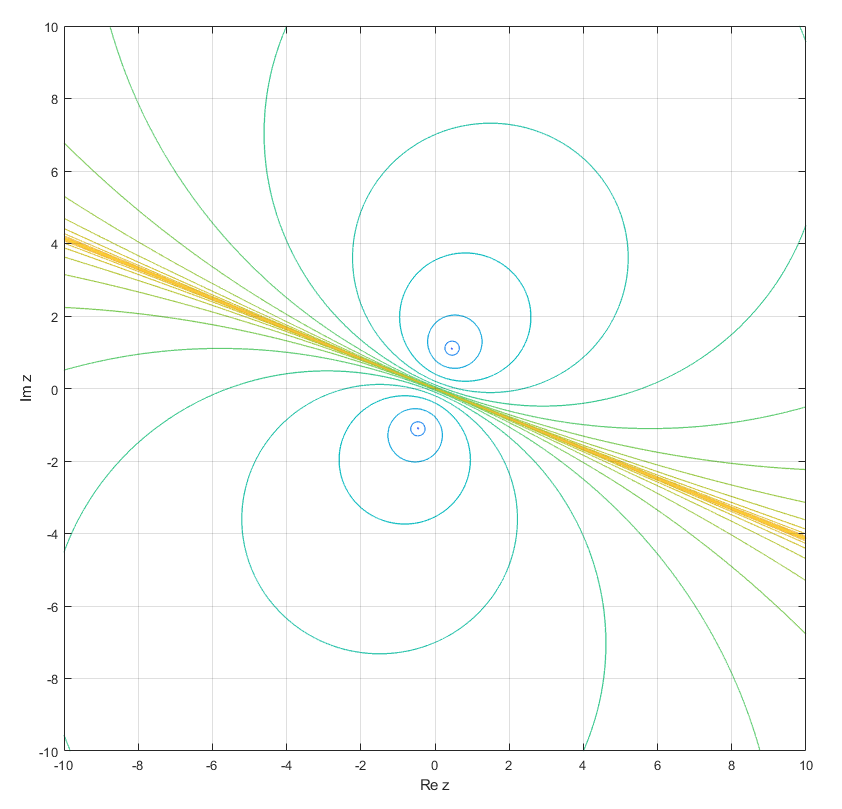}
\caption{Lines whose points are equidistant from the minima in terms of the number of iterates, for $a = -1+i$.}
\label{scalar_fig}
\end{figure}

We hence come to the following conclusion:

Although the dynamics is no more asymptotically linear in the neighbourhood of the saddle point $\hat z = 0$, it is still asymptotically linear in the neighbourhood of the line $l$ as a whole, approximately doubling the distance from this line at each iteration. Being close to the infinitely far point is no worse in terms of iteration count than being close to the origin or to any other point of the line $l$.

The dynamics on the line $l$ itself is isomorphic to a dynamical system on the unit circle which doubles the argument at each step. In particular, the periodic orbits are dense in $l$, and there are $2^k-1$ periodic orbits of length $k$ (including those having a length dividing $k$).

\section{Symmetry groups} \label{sec:symmetry}

The presence of continuous symmetry groups may inhibit the validity of Assumption \ref{assumption_nondegenerate}. Namely, if the minima of $f$ are not isolated but rather form orbits of the group action, the real Hessian at the minima must necessarily have zero eigenvalues. In the case which is relevant for practice, namely when the functions $g_j$ are multi-linear up to an additive constant, the mixed Hessian $B = \frac{\partial^2f}{\partial\bar z\partial z}$ is singular, as the following result shows.

\begin{lemma} \label{lem:kernel_grad_Hess}
Suppose that the argument vector $z$ consists of two sub-vectors $z_1,z_2$, and the functions $g_j$ satisfy the relation
\[ g_j(\zeta z_1,\zeta^{-1}z_2) = g_j(z_1,z_2) \qquad \forall\ \zeta \in \mathbb C_{\times},
\]
where $\mathbb C_{\times} = \mathbb C \setminus \{0\}$ is the multiplicative group of complex scalars. Then the vector $\xi = \begin{pmatrix} z_1 \\ -z_2 \end{pmatrix}$ is both orthogonal to the gradient $\frac{\partial f}{\partial\bar z}$ and contained in the kernel of the mixed Hessian $\frac{\partial^2f}{\partial\bar z\partial z}$. In particular, the latter is singular.
\end{lemma}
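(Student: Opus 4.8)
The plan is to recognize the vector $\xi = \begin{pmatrix} z_1 \\ -z_2 \end{pmatrix}$ as the infinitesimal generator of the prescribed $\mathbb{C}_{\times}$-action and to extract from the invariance of each $g_j$ a single linear relation, from which both assertions then follow by direct substitution into the formulas \eqref{derivatives_f}. First I would write $\zeta = e^t$ and differentiate the identity $g_j(e^t z_1, e^{-t} z_2) = g_j(z_1, z_2)$ with respect to $t$ at $t = 0$. By the holomorphic chain rule this gives
\[ \left(\frac{\partial g_j}{\partial z_1}\right)^T z_1 - \left(\frac{\partial g_j}{\partial z_2}\right)^T z_2 = 0, \]
which, after assembling the holomorphic partials into the column vector $g_j' = \begin{pmatrix} \partial g_j/\partial z_1 \\ \partial g_j/\partial z_2 \end{pmatrix}$, is exactly the compact relation
\[ (g_j')^T \xi = 0 \qquad \text{for every } j = 1,\dots,m. \]
This is the one computation that carries the whole lemma.

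Granted this relation, both claims reduce to one-line verifications using \eqref{derivatives_f}. For the kernel statement I would compute
\[ \frac{\partial^2 f}{\partial\bar z\partial z}\,\xi = \sum_{j=1}^m \overline{g_j'}\,(g_j')^T \xi = \sum_{j=1}^m \overline{g_j'}\cdot 0 = 0, \]
so that $\xi$ lies in the kernel of the mixed Hessian. For the orthogonality statement I would pair $\xi$ with the gradient in the Hermitian inner product,
\[ \xi^* \frac{\partial f}{\partial\bar z} = \sum_{j=1}^m g_j\,\bigl(\xi^* \overline{g_j'}\bigr) = \sum_{j=1}^m g_j\,\overline{(g_j')^T \xi} = 0, \]
where I used $\xi^* \overline{g_j'} = \overline{\xi^T g_j'} = \overline{(g_j')^T\xi}$ because a scalar equals its own transpose. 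Thus the full complex inner product of $\xi$ with the gradient vanishes, which in particular yields the claimed orthogonality.

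The entire substance of the argument sits in the first paragraph; the remainder is bookkeeping. Consequently, the only point demanding care is the transpose/conjugate convention: I must keep straight that $g_j'$ is a holomorphic column vector, that $\frac{\partial f}{\partial\bar z} = \sum_j g_j\overline{g_j'}$ and $\frac{\partial^2 f}{\partial\bar z\partial z} = \sum_j \overline{g_j'}(g_j')^T$ by \eqref{derivatives_f}, and that the relation produced by differentiation is the holomorphic pairing $(g_j')^T\xi = 0$ rather than a Hermitian one, so that it is the transpose $(g_j')^T$, not the conjugate transpose, that meets $\xi$ when the mixed Hessian acts on it. Finally, since $\xi \neq 0$ whenever $z \neq 0$, the existence of this kernel vector shows that $\frac{\partial^2 f}{\partial\bar z\partial z}$ is singular, which establishes the last sentence of the statement.
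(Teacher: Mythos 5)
Your proof is correct, but it takes a different route from the paper's. The paper never descends to the individual $g_j$: it differentiates the induced invariance $f(\zeta z_1,\zeta^{-1}z_2)=f(z_1,z_2)$ along the curve $\zeta=1+\varepsilon$ to get $\frac{\partial f}{\partial z}^T\xi=0$ as an identity in $z$ (this is the orthogonality claim, after using \eqref{Wirtinger_conjugate}), and then differentiates \emph{that identity} with respect to $\bar z$, using that the vector field $\xi$ is holomorphic so that $\frac{\partial\xi}{\partial\bar z}=0$, to obtain $\frac{\partial^2f}{\partial\bar z\partial z}\xi=0$. You instead extract the stronger per-summand relation $(g_j')^T\xi=0$ from the invariance of each $g_j$ and substitute it into the explicit sum-of-squares formulas \eqref{derivatives_f}. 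What each buys: your argument is more concrete and makes both conclusions one-line consequences of \eqref{derivatives_f}, and it exhibits the finer fact that $\xi$ is annihilated by every rank-one summand $\overline{g_j'}(g_j')^T$ separately; the paper's argument is more intrinsic and would apply verbatim to any real-valued invariant $f$, not only to those of the form $\sum_j|g_j|^2$. Your handling of the conjugate/transpose conventions ($\xi^*\overline{g_j'}=\overline{(g_j')^T\xi}$) is exactly right, and you correctly note that singularity requires $\xi\neq 0$, i.e.\ $z\neq 0$ --- a caveat the paper itself leaves implicit.
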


\begin{proof}
Clearly we have
\[ f(\zeta z_1,\zeta^{-1}z_2) = f(z_1,z_2) \qquad \forall\ \zeta \in \mathbb C_{\times}.
\]
For $\zeta = 1 + \varepsilon$ with $\varepsilon \in \mathbb C$ small we have
\[ \begin{pmatrix} \zeta z_1 \\ \zeta^{-1}z_2 \end{pmatrix} = \begin{pmatrix} z_1 \\ z_2 \end{pmatrix} + \begin{pmatrix} \varepsilon z_1 \\ -\varepsilon z_2 \end{pmatrix} + O(\varepsilon^2) = z + \varepsilon\xi + O(\varepsilon^2).
\]
Hence the derivative of $f$ in the direction $a\xi$ vanishes for all $a \in \mathbb C$. In particular, we have
\[ 2\,Re\,\left( a\frac{\partial f}{\partial z}^T\xi \right) = a\frac{\partial f}{\partial z}^T\xi + \bar a\frac{\partial f}{\partial \bar z}^T\bar\xi = 0 \qquad \forall\ a \in \mathbb C,
\]
which implies $\frac{\partial f}{\partial z}^T\xi = \langle \frac{\partial f}{\partial\bar z},\xi \rangle = 0$. Here we used \eqref{Wirtinger_conjugate}. This proves the first assertion.

Differentiating the relation $\frac{\partial f}{\partial z}^T\xi = 0$ with respect to $\bar z$ and noting that the vector field $\xi$ is holomorphic and hence $\frac{\partial\xi}{\partial\bar z} = 0$, we obtain the second assertion $\frac{\partial^2f}{\partial\bar z\partial z}\xi = 0$.
\end{proof}

In the arising particular situation that the kernel of the mixed Hessian $\frac{\partial^2f}{\partial\bar z\partial z}$ is orthogonal to the gradient $\frac{\partial f}{\partial\bar z}$, we can circumvent the degeneracy of the former by virtue of the following result.

\begin{lemma} \label{lem:regul}
Let $B \succeq 0$ be a positive semi-definite complex Hermitian matrix with kernel $V \subset \mathbb C^n$ of dimension $k$. Let further $u \in \mathbb C^n$ be a vector which is orthogonal to all kernel vectors $v \in V$. Let $F \in \mathbb C^{n \times k}$ be a matrix with columns forming a basis of the subspace $V$. Then for every positive definite complex Hermitian $k \times k$ matrix $P$ the sum $B + FPF^*$ is invertible, and the product $(B + FPF^*)^{-1}u$ does not depend on $P$.
\end{lemma}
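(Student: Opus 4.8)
The plan is to treat the two claims separately, both resting on the orthogonal decomposition $\mathbb{C}^n = V \oplus V^\perp$ induced by the Hermitian structure, where $V = \ker B$. The guiding observation is that the rank-$k$ perturbation $FPF^*$ acts only in the directions spanned by $V$, while the vector $u$ and, as it will turn out, the solution of the linear system live entirely in $V^\perp$, the subspace on which this perturbation is invisible.

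For the invertibility I would argue that, since $B \succeq 0$ and $FPF^* \succeq 0$, the sum $B + FPF^*$ is in fact positive definite, by showing its kernel is trivial. If $(B + FPF^*)x = 0$, then $x^*Bx + x^*FPF^*x = 0$ with both summands nonnegative, so each vanishes. From $x^*Bx = 0$ and $B \succeq 0$ we get $x \in \ker B = V$, hence $x = Fc$ for some $c \in \mathbb{C}^k$; from $x^*FPF^*x = (F^*x)^*P(F^*x) = 0$ and $P \succ 0$ we get $F^*x = 0$, that is $F^*Fc = 0$. Because the columns of $F$ are linearly independent, the Gram matrix $F^*F$ is invertible, forcing $c = 0$ and thus $x = 0$. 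Hence $B + FPF^*$ has trivial kernel and is invertible.

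For the independence of $(B + FPF^*)^{-1}u$ from $P$, I would simply exhibit a $P$-independent solution. The hypothesis that $u$ is orthogonal to every kernel vector reads $F^*u = 0$ and places $u$ in $V^\perp = \operatorname{range} B$. Since the Hermitian matrix $B$ restricts to a bijection of $V^\perp$ onto itself (its range equals $V^\perp$ and its only kernel direction, $V$, meets $V^\perp$ trivially), there is a unique $x_0 \in V^\perp$ with $Bx_0 = u$. As $x_0 \perp V$ we also have $F^*x_0 = 0$, whence $(B + FPF^*)x_0 = Bx_0 + FP(F^*x_0) = u$ for every admissible $P$. By the invertibility just established the solution of the system is unique, so $(B + FPF^*)^{-1}u = x_0$ regardless of $P$.

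There is no genuine obstacle here once the right observation is isolated: the only subtlety is recognizing that the orthogonality of $u$ to $V$ drives the solution into $V^\perp$, precisely the subspace annihilated by the regularizing term $FPF^*$. The supporting linear-algebra facts --- that $x^*Bx = 0$ implies $Bx = 0$ for $B \succeq 0$, that $F^*F$ is invertible for a full-column-rank $F$, and that a Hermitian matrix restricts to an isomorphism on the orthogonal complement of its kernel --- are all standard, so I would state them without elaboration.
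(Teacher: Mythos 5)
Your proof is correct and rests on the same fundamental observation as the paper's: the orthogonal decomposition $\mathbb{C}^n = V \oplus V^\perp$, with $u$ and the solution living in $V^\perp$ where the perturbation $FPF^*$ acts trivially. The only difference is presentational --- the paper passes to a unitary basis adapted to this decomposition and computes in block form, whereas you argue coordinate-free by showing the kernel of $B+FPF^*$ is trivial and exhibiting the $P$-independent solution $x_0 \in V^\perp$ directly; both are complete.
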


\begin{proof}
Since the images of $B$ and $F$ are orthogonal, we find a coordinate system in $\mathbb C^n$, by applying a unitary transformation, such that $B,F$ decompose in blocks
\[ B = \begin{pmatrix} B' & 0 \\ 0 & 0 \end{pmatrix}, \qquad F = \begin{pmatrix} 0 \\ F' \end{pmatrix}
\]
for some $B'$ complex Hermitian positive definite of size $n - k$, and some $F' \in \mathbb C^{k \times k}$ invertible. Then we have
\[ B + FPF^* = \begin{pmatrix} B' & 0 \\ 0 & F'P(F')^* \end{pmatrix} \succ 0
\]
for every $P \succ 0$. In particular, the sum in question is invertible. Moreover, the vector $u$ is in the image of $B$, and hence can also be decomposed as $u = \begin{pmatrix} u' \\ 0 \end{pmatrix}$ for some $u' \in \mathbb C^{n-k}$. Then we obtain
\[ (B + FPF^*)^{-1}u = \begin{pmatrix} B' & 0 \\ 0 & F'P(F')^* \end{pmatrix}^{-1}\begin{pmatrix} u' \\ 0 \end{pmatrix} = \begin{pmatrix} (B')^{-1}u' \\ 0 \end{pmatrix},
\]
which is independent of $P$.
\end{proof}

In particular, the limit $\lim_{P \to 0}(B + FPF^*)^{-1}u$ is well-defined for every sequence of positive definite matrices $P$ tending to zero, and is given by the product $(B + FPF^*)^{-1}u$ for arbitrary $P \succ 0$.

We obtain the following consequence.

\begin{lemma} \label{lem:regulMNM}
Suppose the vector $z$ is divided into sub-vectors $z_1,\dots,z_l$, and every holomorphic function $g_j(z)$ is of the form
\[ g_j(z) = \tilde g_j(z) + c_j,
\]
where $c_j \in \mathbb C$ are constants and $\tilde g_j(z) = \tilde g_j(z_1,\dots,z_l)$ is multi-linear in the sub-vectors of $z$.
Assume further that the kernel of the mixed Hessian $\frac{\partial^2f}{\partial\bar z\partial z}$ is spanned by the columns of the matrix
\[ \Xi = \begin{pmatrix} z_1 & z_1 & \dots & z_1 \\ -z_2 & 0 & \dots 0 \\ 0 & -z_3 & \dots & 0 \\ \vdots & \vdots & \vdots & \vdots \\ 0 & 0 & \dots & -z_l \end{pmatrix}.
\]

Then the MNM can be regularized by adding the term $\Xi\Xi^*$ to the mixed Hessian. More concretely, performing the iteration according to the rule
\[ z \leftarrow z - \left( \frac{\partial^2f}{\partial\bar z\partial z} + \Xi\Xi^* \right)^{-1} \frac{\partial f}{\partial\bar z}
\]
leads to the same result as using the iteration
\[ z \leftarrow z - \lim_{\varepsilon \to 0} \left( \frac{\partial^2f}{\partial\bar z\partial z} + \varepsilon\Xi\Xi^* \right)^{-1} \frac{\partial f}{\partial\bar z}.
\]
\end{lemma}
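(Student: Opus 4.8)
The plan is to reduce the statement to Lemma \ref{lem:regul} by identifying $B = \frac{\partial^2f}{\partial\bar z\partial z}$ with the positive semi-definite Hermitian matrix of that lemma, $u = \frac{\partial f}{\partial\bar z}$ with the vector orthogonal to the kernel, and the regularizer $\Xi\Xi^*$ with a matrix of the form $FPF^*$. By hypothesis the kernel $V = \ker B$ equals the column space of $\Xi$. Once the orthogonality $u \perp V$ and the factorization $\Xi\Xi^* = FPF^*$ are in place, Lemma \ref{lem:regul} yields at once that $(B + \varepsilon\Xi\Xi^*)^{-1}u$ is independent of $\varepsilon > 0$; the value at $\varepsilon = 1$ then equals the value for every $\varepsilon > 0$, and the limit $\varepsilon \to 0$ coincides with all of them, so the two iteration rules produce the same point.

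First I would verify the orthogonality $u \perp V$. The key observation is that $f$ is invariant under the $(l-1)$-dimensional torus action $(z_1,\dots,z_l) \mapsto (\zeta_1 z_1,\dots,\zeta_l z_l)$ subject to $\prod_{i=1}^l\zeta_i = 1$: multi-linearity of $\tilde g_j$ gives $\tilde g_j(\zeta_1 z_1,\dots,\zeta_l z_l) = \left(\prod_i\zeta_i\right)\tilde g_j(z) = \tilde g_j(z)$, so each $g_j = \tilde g_j + c_j$, and hence $f = \sum_j |g_j|^2$, is left unchanged. Differentiating the curves $\zeta_i = 1 + \varepsilon a_i$ with $\sum_i a_i = 0$ produces exactly the holomorphic tangent directions whose basis, for $a$ ranging over $e_1 - e_{k+1}$ with $k = 1,\dots,l-1$, is the set of columns of $\Xi$. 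Repeating the argument of Lemma \ref{lem:kernel_grad_Hess}, namely that the directional derivative of the invariant function $f$ along each such holomorphic vector field vanishes, I obtain $\langle \frac{\partial f}{\partial\bar z},\xi \rangle = 0$ for every column $\xi$ of $\Xi$, that is, $u \perp V$.

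Next I would cast the regularizer in the form required by Lemma \ref{lem:regul}. Let $F \in \mathbb C^{n \times k}$ have columns forming an orthonormal basis of $V$, where $k = \dim V$. Since the columns of $\Xi$ span $V$, we may write $\Xi = FC$ for some $C \in \mathbb C^{k \times (l-1)}$ of full row rank, whence $\Xi\Xi^* = F(CC^*)F^* = FPF^*$ with $P = CC^* \succ 0$, and likewise $\varepsilon\Xi\Xi^* = F(\varepsilon P)F^*$ with $\varepsilon P \succ 0$. Lemma \ref{lem:regul}, applied with this $F$ and the vector $u$ just shown to be orthogonal to $V$, states that $(B + FQF^*)^{-1}u$ is the same for every $Q \succ 0$. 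Taking $Q = P$ and $Q = \varepsilon P$ gives $(B + \Xi\Xi^*)^{-1}u = (B + \varepsilon\Xi\Xi^*)^{-1}u$ for all $\varepsilon > 0$; the right-hand side being constant in $\varepsilon$, its limit as $\varepsilon \to 0$ equals the left-hand side, establishing the claimed equality of the iterates.

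The main obstacle is the orthogonality step $u \perp V$: it is not automatic that the gradient is orthogonal to the kernel of the mixed Hessian, and this must be extracted from the symmetry. Lemma \ref{lem:kernel_grad_Hess} settles the two-block case, so the real work here is to confirm that its argument transfers to each of the $l-1$ generators of the larger torus, which it does because every such generator is again a holomorphic vector field tangent to an orbit of a group under which $f$ is invariant. A minor technical point, dispatched by the factorization $\Xi = FC$ above, is that the columns of $\Xi$ need not be assumed linearly independent: it suffices that they span $V$, since then $\Xi\Xi^*$ is automatically of the form $FPF^*$ with $P \succ 0$.
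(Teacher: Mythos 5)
Your proof is correct and follows essentially the same route as the paper: establish that the gradient $\frac{\partial f}{\partial\bar z}$ is orthogonal to the columns of $\Xi$ by applying the argument of Lemma \ref{lem:kernel_grad_Hess} to each pair of blocks $(z_1,z_{k+1})$ via the scaling $(\zeta z_1,\zeta^{-1}z_{k+1})$, and then invoke Lemma \ref{lem:regul}. Your explicit factorization $\Xi\Xi^* = FPF^*$ with $P = CC^*\succ 0$ is a welcome extra precision (handling possible linear dependence of the columns of $\Xi$) that the paper leaves implicit.
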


\begin{proof}
First note that the columns of $\Xi$ are both in the kernel of $\frac{\partial^2f}{\partial\bar z\partial z}$ and orthogonal to $\frac{\partial f}{\partial\bar z}$. The proof is by an obvious adaptation of the proof of Lemma \ref{lem:kernel_grad_Hess}. The assertion then follows from Lemma \ref{lem:regul}.
\end{proof}

In the sequel, if we speak of the MNM for bi-linear or multi-linear models, we always mean the regularized MNM as presented in Lemma \ref{lem:regulMNM}.

The results from Section \ref{sec:asymptoticMixed} with some adaptations carry over to the situation considered in this section. Instead of the matrices $A, B$ we have to consider their restrictions to the orthogonal complement of the column space of $\Xi$, and the matrix $S$ whose singular values determine the convergence behavior has to be computed from those restrictions. The convergence (or repulsion) has to be understood not to a particular critical point $z^*$, but to its orbit under the symmetry group. 

\section{Counter examples} \label{sec:counterexamples}

\subsection{Uniqueness of minima} \label{sec:localMinima}

As was shown in the previous section, on the class of problems given by \eqref{f_definition} the MNM is converging to minima while avoiding saddle points. In this section, we provide examples of such problems with several minima, not all of which are global. All examples are sums of squares of scalar holomorphic functions.

\medskip

\begin{example}\label{ex2minima}
Consider the objective function
\[ f(z) = \left|\exp((2+0.2i)z+5-2i)\right|^2 + \left|\log((1-i)z-1-0.1i)\right|^2 + \left|\frac{2z+5}{3z+4}\right|^2.
\]
For this function, we have two minima, see Fig. \ref{basinsex2minima}.
\begin{figure}[htp]
		\begin{center}
			\includegraphics[width=0.75\linewidth]{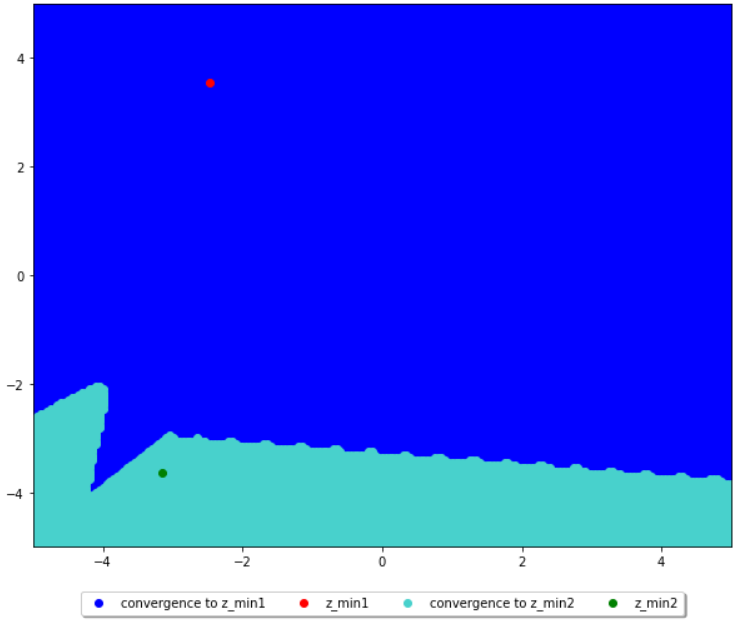}
		\end{center}
		\caption{Attraction basins of the two minima for the objective function in Example \ref{ex2minima}.}
\label{basinsex2minima}
\end{figure}
\end{example}

\begin{example}\label{ex3minima}
Consider the objective function
\[ f(z) = \left|\exp((-2+5i)z-2-3i)\right|^2 + \left|\log((0.1+0.5i)z-1-i)\right|^2 + \left|\frac{2z+5}{3z+4}\right|^2.
\]
For this function, we have three minima, see Fig. \ref{basinsex3minima}.
\begin{figure}[htp]
		\begin{center}
			\includegraphics[width=0.80\linewidth]{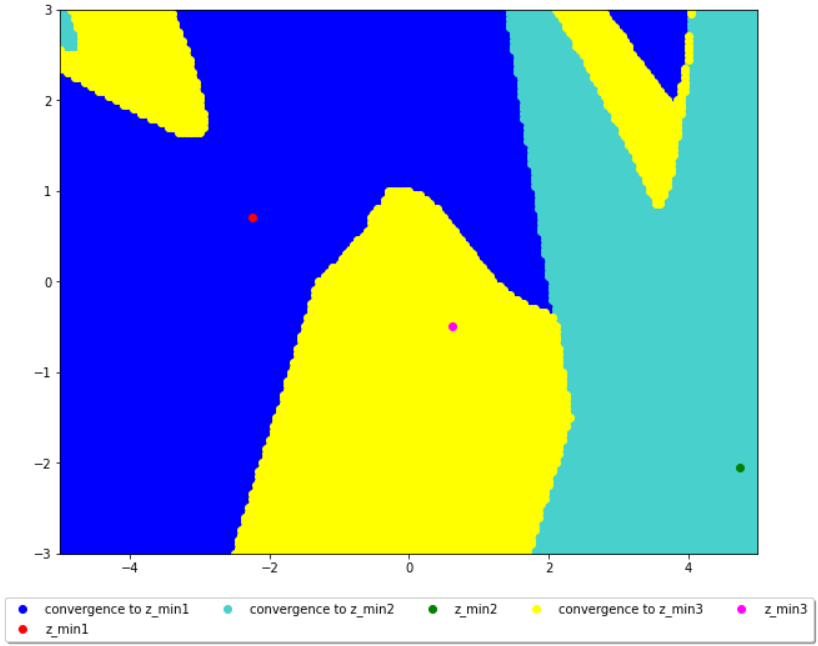}
		\end{center}
		\caption{Attraction basins of the three minima for the objective function in Example \ref{ex3minima}.}
\label{basinsex3minima}
\end{figure}
\end{example}

\begin{example}\label{ex4minima}
Consider the objective function
\[ f(z) = \left|\exp((-2+5i)z-2-3i)\right|^2 + \left|\log((0.1+0.5i)z-1-i)\right|^2 + \left|\frac{(2+i)z+1+2i}{(-1+i)z^2+(-4+2i)z-3-i}\right|^2.
\]
For this function, we have four minima, see Fig. \ref{basinsex4minima}.
\begin{figure}[htp]
		\begin{center}
			\includegraphics[width=0.88\linewidth]{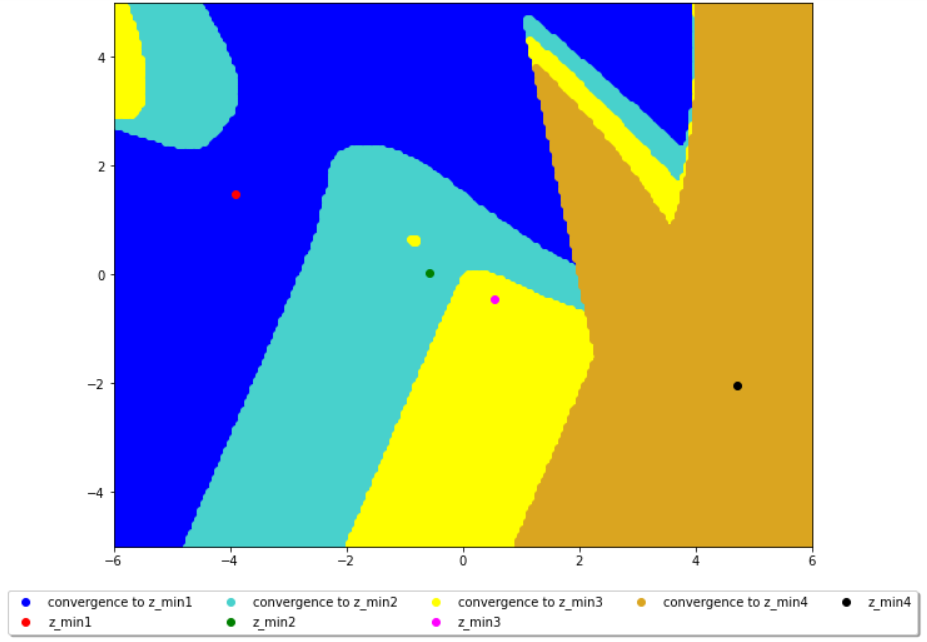}
		\end{center}
		\caption{Attraction basins of the four minima for the objective function in Example \ref{ex4minima}.}
\label{basinsex4minima}
\end{figure}
\end{example}


\subsection{Attractive cycles} \label{sec:periodicCycle}

In this section, we consider more examples that fall  in the framework of the second special case described in Section \ref{sec:special}. Other examples with pictures of attraction basins can be found in \cite[Section 3]{Wiersma16}, but here we concentrate on the presence of non-trivial attracting periodic cycles.

The fact that the mixed Newton iterates are attracted to minima and repelled from saddle points does not yet imply that every initial point in a generic position gives rise to a sequence of iterates that converges to a minimum. Namely, the iterates may also accumulate to a periodic cycle or behave chaotically. In this section, we confirm the former possibility by presenting examples of functions on which the MNM has attractive periodic cycles.

Let us generalize the example of Section \ref{sec:square_example}. Consider a non-linear \emph{rational} function $g(z)$ and the corresponding objective $f(z) = |g(z)|^2$. By virtue of \eqref{scalar_iterate}
the mixed Newton step corresponds to the application of a rational function. It is convenient to include the infinitely far point into consideration, and to study the dynamics on the Riemann sphere $\mathbb S$.

{\lemma Let the function $g$ be meromorphic in a neighbourhood of $z = \infty$. Then $z = \infty$ is a fixed point of the Mixed Newton Method. If the function $g$ remains finite at infinity, then the point $\infty$ is attractive, otherwise, it is repulsive. }

\begin{proof}
Let the two leading terms of the expansion of $g$ around $z = \infty$ be $c_mz^m + c_{m'}z^{m'}$, $m,m' \in \mathbb Z$, $m > m'$. Then \eqref{scalar_iterate} yields
\[ z_{k+1} = z_k - \frac{c_mz_k^m + c_{m'}z_k^{m'} + o(z_k^{m'})}{c_mmz_k^{m-1} + c_{m'}m'z_k^{m'-1} + o(z_k^{m'-1})} = \left\{ \begin{array}{rcl} \frac{m-1}{m}z_k - o(z_k), &\quad& m \not= 0, \\ -\frac{c_0}{c_{m'}m'}z_k^{1-m'} + O(z_k), && m = 0. \end{array} \right.
\]
If $m > 1$, then $\left|\frac{m-1}{m}\right| < 1$, and $\infty$ is repulsive with a linear rate of recession. If $m = 1$, then $z_{k+1} = o(z_k)$, and $\infty$ is repulsive with a super-linear rate of recession. If $m = 0$, then $m' < 0$, and $z_{k+1} = O(z_k^{1-m'})$ grows at least as fast as $z_k^2$. Hence $\infty$ is attractive with a super-linear rate of convergence. If $m < 0$, then $\left|\frac{m-1}{m}\right| > 1$, and $\infty$ is attractive with a linear rate of convergence.
\end{proof}

Note that if $m < 0$, then the function $g$ has a zero at $z = \infty$, and hence $f$ has a local minimum. This minimum is attractive like finite minima. However, if $m = 0$, then $z = \infty$ is neither a minimum nor a maximum of $f$. Nevertheless the method "wrongly" converges to that point.

\medskip

The theory of iterates of rational maps (on general Riemann surfaces) is quite advanced. A central place in the description of the dynamics is played by the \emph{Julia set} and the \emph{Fatou sets}.

\begin{definition}
Let $h: \mathbb S \to \mathbb S$ be a non-constant holomorphic map. The \emph{Fatou set} $F(h)$ of $h$ is the largest open subset of $\mathbb S$ on which the family of iterates of $h$, $\{ h^{\circ n} \}_{n \geq 0}$, is normal (has compact closure in the topology of uniform convergence on compact subsets). The \emph{Julia set} $J(h)$ is defined to be the complement of $F(h)$.
\end{definition}

Here $h^{\circ n}$ is the operator of $n$-fold application of $h$. In other words, the Fatou set is the largest subset of $\mathbb S$ such that for a given point in $F(h)$ any of its nearby points experience a qualitatively similar behavior  when the map $h$ is iterated on them. On the contrary, the Julia set separates domains containing points whose iterates behave in a qualitatively different manner. In the example of Section \ref{sec:square_example}, the line $l$ (including $\infty$) is the Julia set, while the open half-planes $H,H_-$ are the two connected components of the Fatou set.

The general theory \cite{Beardon},\cite{PeitgenSaupeHaeseler84} yields the following assertions:
\begin{itemize}
\item The minima of $f$ together with their attraction basins lie in the Fatou set, while saddle points lie in the Julia set.
\item The backwards orbit of any point $z_0 \in J(h)$ is dense in $J(h)$.
\item The Julia set is the closure of the union of repelling periodic points.
\item Connected components of $F(h)$ are mapped to each other, and the dynamics of these components becomes eventually periodic.
\end{itemize}

Let $H$ be a Fatou component which is mapped to itself by the power $h^{\circ p}$. Then the dynamics of $h^{\circ p}$ is of one of the following four types:
\begin{itemize}
\item an attractive basin if $h^{\circ p}$ has a fixed point that attracts all points of $H$ under iteration;
\item a parabolic basin if some point on the boundary of $H$ attracts all points of $H$;
\item a Siegel disk if the dynamical system defined by $h^{\circ p}$ is conformally isomorphic to an irrational rotation of the unit disk;
\item a Herman ring if this dynamical system is conformally isomorphic to an irrational rotation of some annulus.
\end{itemize}

In the context of the MNM the main question to analyze is the following:

\medskip

\emph{Do there exist periodic Fatou components other than attractive basins with a period $p = 1$?}

\medskip

An affirmative answer to this question implies that the only points which do not converge to minima of $f$ or the infinitely far point (if that happens) are the points of the Julia set. A negative answer implies that there exists an open subset of $\mathbb C$ all whose points do not converge to a minimum, nor to infinity.

\medskip

\emph{Example:} The following example shows that the answer to the question is, in general, negative. Consider the rational function
\[ g(z) = \frac{(-10 + 4i)z^2 + 4z + 16 - 15i}{3z^2 + (-23 + 3i)z - 7 + 3i}.
\]
This leads to the iterate
\[ z_{k+1} = \frac{(30 - 12i)z_k^4 - 24z_k^3 + (6 + 65i)z_k^2 + (646 - 786i)z_k + 67 - 153i}{(206 - 122i)z_k^2 + (20 - 26i)z_k + 295 - 381i}.
\]
The function $f = |g|^2$ has two minima with corresponding attraction basins. However, a starting point $z_0 \in \mathbb C$ in generic position can yield also a sequence that
\begin{itemize}
\item converges to the infinitely far point;
\item converges to the periodic cycle $\{-1.893587299330874 + 3.118941827800031i,-1.623493978443789 - 2.198560522966791i \}$ of length 2.
\end{itemize}
The Fatou set consists of infinitely many connection components, and the dynamics of these components is such that any sequence of iterates finally ends up either in one of three fixed components (corresponding to the two minima and the infinitely remote point) or in the periodic cycle of length two.

The attraction basins of the four attracting objects are depicted in Fig.~\ref{fractal_basins}. In Fig.~\ref{fractal_convergence} we depict the number of iterations which is needed from a given initial point to reach an $\varepsilon$-neighbourhood of one of the attracting objects (in the case of the infinitely far point, to surpass an absolute value of $\varepsilon^{-1}$). It can be seen that convergence to the periodic orbit is slower (in fact, linear, while convergence to the fixed points of the map is quadratic). The boundary between the attraction basins is the Julia set of the rational map.

\begin{figure}
\centering
\includegraphics[width=13cm,height=12.5cm]{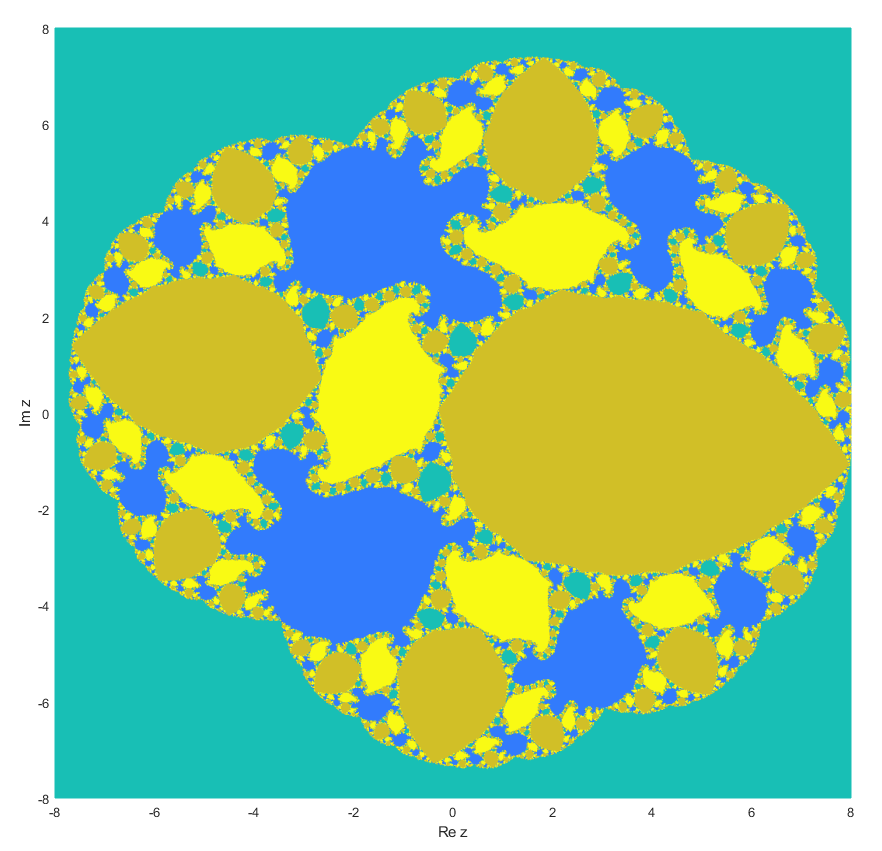}
\caption{Attraction basins of the two minima (yellow-brown and yellow), the infinitely far point (light blue) and the periodic 2-cycle (blue).}
\label{fractal_basins}
\end{figure}

\begin{figure}[h]
\centering
\includegraphics[width=13cm,height=12cm]{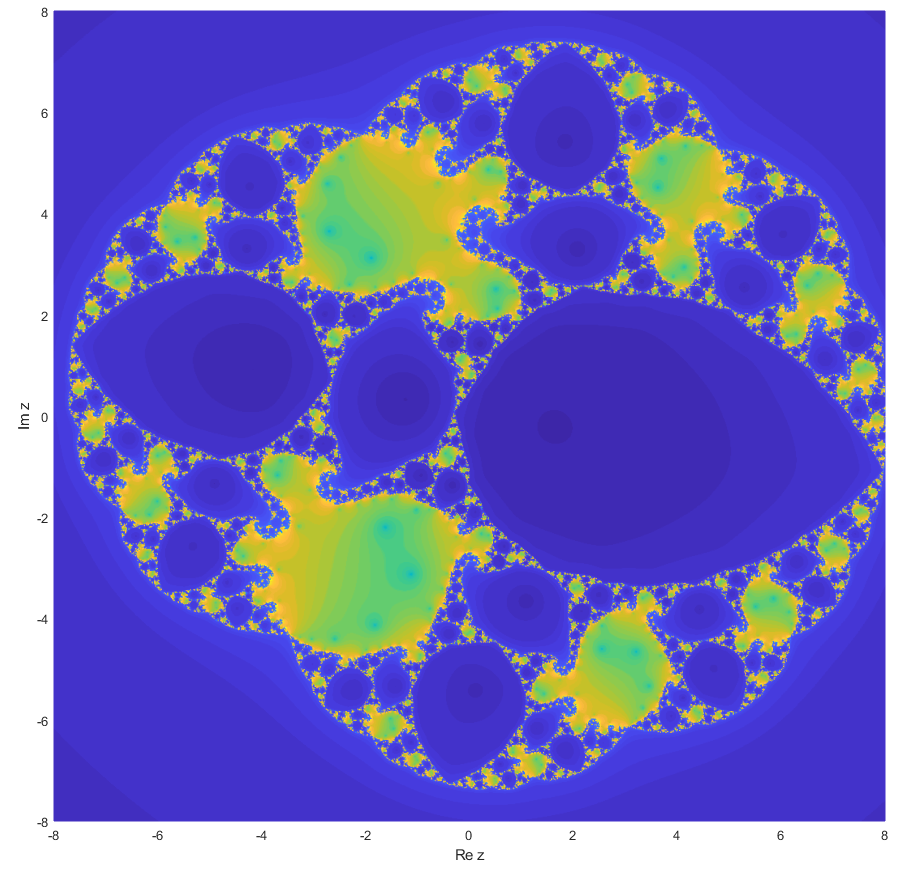}
\caption{Number of iterations needed for $\varepsilon$-convergence to an attracting object.}
\label{fractal_convergence}
\end{figure}

\medskip

\emph{Example:} Let us give another example with a polynomial function
\[ g(z) = z^3 + (1.33 + 0.81i)z^2 + (1.38 + 1.20i)z + 0.82 - 0.03i
\]
This function is from the class studied in \cite{CurryGarnettSullivan83}, where attracting cycles of different lengths have been found for cubic polynomials $g(z)$.

The mixed Newton iterate for the above example takes the form
\[ z_{k+1} = \frac{2z_k^3 + (1.33 + 0.81i)z_k^2 - 0.82 + 0.03i}{3z_k^2 + (2.66 + 1.62i)z_k + 1.38 + 1.20i}.
\]

The function $f = |g|^2$ has three minima with corresponding attraction basins. However, there exists also the attractive periodic cycle $\{-0.429935304964516 - 0.280763328984984i,-0.604967059812480 + 0.456563910615763i\}$ of length 2 with its own attraction basin. A starting point in this basin will not converge to any of the minima, and this behavior  is robust against small perturbations.

The attraction basins of the four attracting objects are depicted in Fig.~\ref{fractal_cubic}, left. In the right part of Fig.~\ref{fractal_cubic}, we depict the number of iterations which is needed from a given initial point to reach an $\varepsilon$-neighbourhood of one of the attracting objects. Again the convergence to the periodic orbit is linear, while convergence to the fixed points of the map is quadratic.

\begin{figure}
\centering
\includegraphics[width=14.27cm,height=8.14cm]{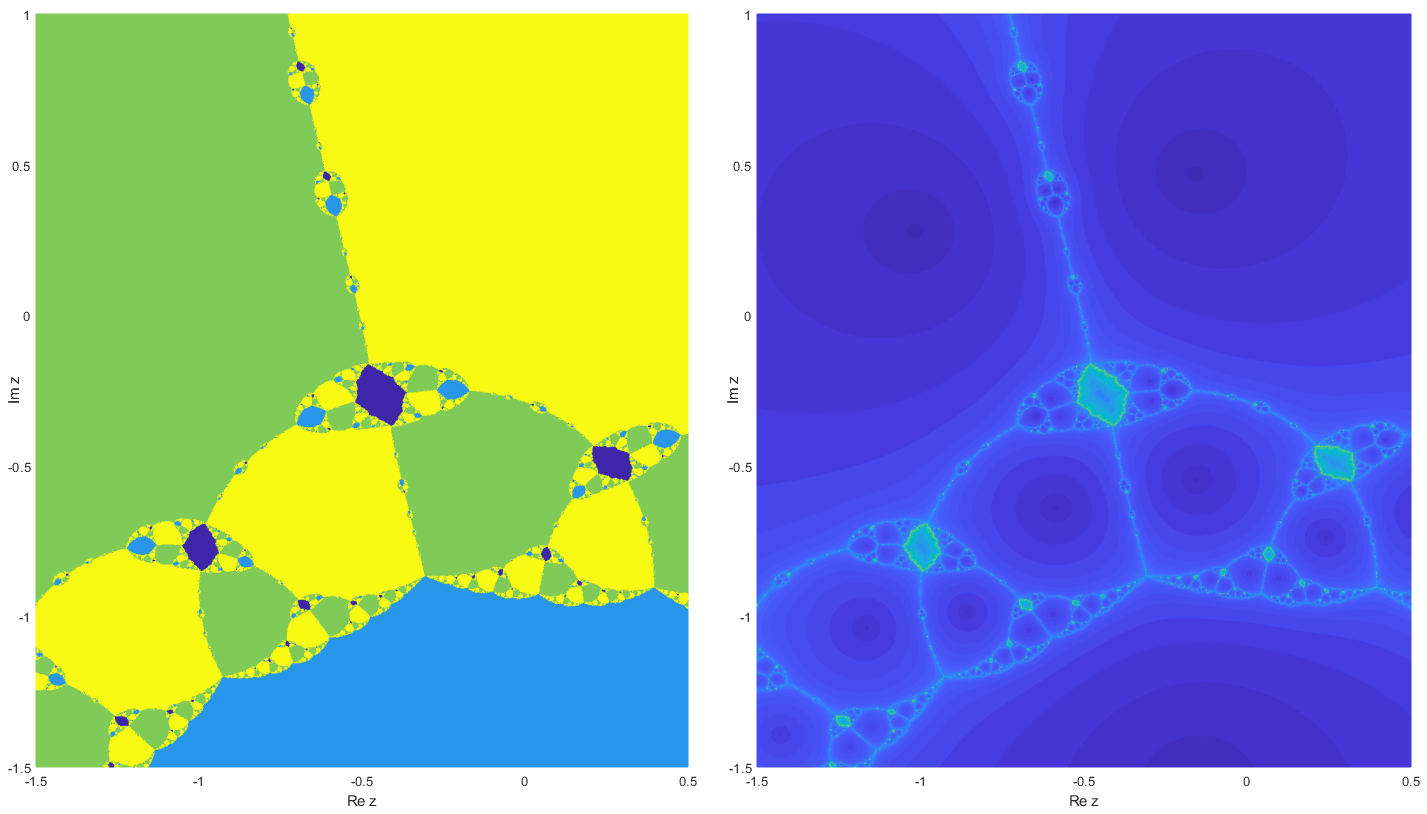}
\caption{Polynomial example. Left: attraction basins of the three minima (green, yellow, light-blue) and the periodic 2-cycle (blue). Right: number of iterations needed for $\varepsilon$-convergence to an attracting object.}
\label{fractal_cubic}
\end{figure}

\section{Application in telecommunications} \label{sec:applications}

In this section we describe the application of the MNM in parameter estimation as it is used in telecommunications.

\subsection{Linear system identification}

In this section, we consider a very simple practical example for linear system identification, that is widely used in telecommunications for channel estimation, linear equalization, prediction and also for adaptive noise cancellation.

The linear system can be described in matrix form as $y = Uz$, here $y\in \mathbb{C}^n$ is the output vector, $U\in \mathbb{C}^{n\times m}$ is a matrix which contains delayed input signal vectors, $z\in \mathbb{C}^m$ is the system parameter vector. The identification task is to estimate the coefficients $z$ by minimization of the 2-norm of the difference between the system output and a desired signal $d\in\mathbb{C}^n$:

\begin{equation*}
  	\min_{z \in D} f(z):= (d - Uz)^* (d-Uz) = \sum_{j = 1}^n \left|d_j - u_j^T z \right|^2,
\end{equation*}
where $u_j^T$ is the $j$-th row of the state matrix $U$.

By Lemma \ref{lem:complex_affine} the MNM solves this optimization problem in a single iteration, just as the ordinary Newton method with full step. However, the MNM step requires fewer operations since it solves a smaller linear system.

%
%
%
%

\subsection{MNM for two layer Hammerstein model estimation}

We now consider a more complicated non-linear problem. The two-layer non-linear Hammerstein model is widely used for power amplifier behaviour modelling in telecommunications systems. 

Graphically the Hammerstein model can be presented as a two layer cascade of a non-linear function and a linear convolution as shown on Fig.~\ref{hammerstin_model}.
\begin{figure}[h]
	\centering
	\includegraphics[]{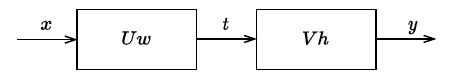}
	\caption{Two layer Hammerstein model structure}
	\label{hammerstin_model}
\end{figure}
The state matrix $U \in \mathbb{C}^{n \times (r+1)}$ contains non-linear functions $\varphi_p(x_{j})$ applied to the input signal $x$. The vector $t \in \mathbb{C}^n$ is the non-linear output of this first block. Delayed non-linear outputs $t_{j-q}$ generate the state matrix $V\in \mathbb{C}^{n \times (m+1)}$ in the second layer.

The $j$-th element $y_j$ of the model output can hence be described as
\begin{equation}
	y_j = \sum_{q = 0}^{m} h_q \underbrace{\sum_{p = 0}^{r} \varphi_p(x_{j-q}) w_p}_{\text{non-linear function}}  = \sum_{p = 0}^{r} w_p \underbrace{\sum_{q = 0}^{m} h_q \varphi_p(x_{j-q})}_{\text{linear convolution}},\quad j = 1\ldots n,
\end{equation}
where $x_{j}$ is the $j$-th element of the input signal $x$, $\varphi_p(\cdot)$ is the $p$-th non-linear basic function, for example a Chebyshev polynomial of order $p$, $w_p$ is the $p$-th basic function coefficient, $h_q$ is the $q$-th linear filter coefficient. 

The Hammerstein model estimation problem can be formulated as
\begin{equation}
 	\min_{w,h \in D} f(w,h):= (d - y)^* (d-y) = \sum_{j = 1}^n \left|d_j - \sum_{q = 0}^{m} h_q \sum_{p = 0}^{r} \varphi_p(x_{j-q}) w_p \right|^2.
 	\label{hamm_cost}
\end{equation}

In matrix form the Hammerstein model output can be written as
\begin{equation*}
	y = Vh = \tilde U w, \quad V_{j,q} =  \sum_{p = 0}^{r} \varphi_p(x_{j-q}) w_p, \quad \tilde{U}_{j,p} =  \sum_{q = 0}^{m} h_q \varphi_p(x_{j-q})
\end{equation*}
The bilinearity of the model in the parameters $h,w$ leads to non-convexity of the problem. If $w_{\text{opt}}$ and $h_{\text{opt}}$ are the optimal coefficients, then $w_{\text{opt}}/\alpha$ and $h_{\text{opt}}\alpha$ will provide the same cost function value for any non-zero complex constant $\alpha$.

We can concatenate the coefficients $w$ and $h$ in one common vector $z \in \mathbb{C}^{r+m+2}$ and calculate the first and mixed second derivatives appearing in the MNM. The first derivative is given by

\begin{equation*}
	\frac{\partial f}{\partial \bar z}  =
	\begin{pmatrix}
		\frac{\partial f}{\partial \bar w}\\
		 \frac{\partial f}{\partial \bar h}
	\end{pmatrix} =
	\begin{pmatrix}
		\frac{\partial f}{\partial \bar w} (d - \tilde U w)^*(d - \tilde U w)\\
		\frac{\partial f}{\partial \bar h} (d - V h)^*(d - V h)
	\end{pmatrix} =
	\begin{pmatrix}
		-\tilde U^*(d - \tilde U w)\\
		-V^*(d - V h)
	\end{pmatrix}.
\end{equation*}

The mixed derivative can be calculated if we take into account the property $Vh = \tilde U w$:
\begin{equation*}
	\frac{\partial f^2}{\partial \bar z \partial z}  =
	\begin{pmatrix}
		\frac{\partial f^2}{\partial \bar w \partial w} & \frac{\partial f^2}{\partial \bar w \partial h}\\
		\frac{\partial f^2}{\partial \bar h \partial w}  & \frac{\partial f^2}{\partial \bar h \partial h}
	\end{pmatrix} =
	\begin{pmatrix}
		\tilde U^* \tilde U & \tilde U^* V\\
		V^* \tilde U & V^* V
	\end{pmatrix}.
\end{equation*}
Finally the $k-$th iteration of the MNM estimation of the Hammerstein model is:

\begin{equation*}
z_{k+1} = z_k  -
\begin{pmatrix}
	\tilde U^* \tilde U & \tilde U^* V\\
	V^* \tilde U & V^* V
\end{pmatrix}^{-1}
\begin{pmatrix}
	-\tilde U^*(d - \tilde U w_k)\\
	-V^*(d - V h_k)
\end{pmatrix}.
\label{hamm_iterations}
\end{equation*}
We did not use the regularization approach introduced in Lemma \ref{lem:regulMNM}. It is observed that the degeneracy issue becomes not relevant when not too close to the optimum. Clearly the mixed derivative is always a non-negative definite matrix despite the non-convex nature of the model. We can see also that $z = 0$ is a stationary point of the model and hence cannot be used as an initial point.

Testing of the MNM algorithm was performed for mobile station SKY66299-11 power amplifier nonlinear distortion  identification. The 4G signal $x$ was fed to the input of the power amplifier at 1950 MHz and the output of the amplifier $d$ was captured in the feedback circuit. The amplifier  in non-linear mode added the distortion $\xi = d-x$ to the original signal, which could be observed on the output. 

The amplifier structure includes a transistor in nonlinear mode and output matching circuits that add inertia or memory to the nonlinear distortion at the output of the transistor. The following two-layer Hammerstein model was used to describe the physical processes of the nonlinear amplifier:

\begin{equation*}
y_j =  \sum_{q = 0}^{m} h_q \sum_{p = 0}^{r} w_p x_{j-q} |x_{j-q}|^p, \quad m = 11, \quad p = 8, \quad j = 0,\ldots, n-1. 
\end{equation*}

The model parameters can be calculated by solving problem \eqref{hamm_cost} by virtue of the MNM iterations \eqref{hamm_iterations}. Initial values of the coefficients $w$ and $h$ were generated in the neighbourhood  of the saddle point $(w = 0,h = 0)$ as random complex vectors with different standard deviations from $10^{-6}$ to 1. For each standard deviation value, 100 initial vectors were generated and the MNM algorithm was started for each initial vector. The MNM convergence statistics is shown in the following table:

\begin{table}[h!]
\centering
\begin{tabular}{|l|l|l|l|l|l|l|}
\hline
$STD[w, h]$ & $N_{min}$ & $N_{max}$ & $N_{mean}$ & $NMSE_{min}$, dB & $NMSE_{max}$, dB& $Pr$, \% \\ \hline
$10^{-6}$   &  16       &  481      &    48.6    &   -38.645    &   -38.110    &    100          \\ \hline
$10^{-4}$   &  12       &  296      &    43.2    &   -38.645    &   -38.645    &    100          \\ \hline
$10^{-2}$   &  18       &  354      &    56.3    &   -38.645    &   -38.645    &    100          \\ \hline
$10^{0}$    &  15       &  784      &    45.4    &   -38.645    &   -27.543    &    99           \\ \hline
\end{tabular}
\end{table}

Here $STD[w, h]$ is the standard deviation of the initial coefficients $w$ and $h$;   $N_{min}$, $N_{max}$, and $N_{mean}$ are the minimum, maximum, and mean of the number of MNM iterations over 100 trials for this initial deviation, correspondingly; $NMSE_{min}$ and $NMSE_{max}$ are the minimum and maximum normalized square error over 100 trials; $Pr$ is the probability of convergence to a global minimum. The NMSE was calculated according to the equation:

\begin{equation*}
NMSE = 10 \log_{10} \frac{e^*e}{x^*x}, \quad e_j = \xi_j - y_j, \quad j = 0\ldots n-1,
\end{equation*}
where $e$ is the error vector between the captured distortions $\xi$ and the Hammerstein model output. A smaller NMSE value correponds to a better minimum.

We can see that the MNM algorithm converges to the global optimum (NMSE $\leq -38$~dB) if we start  from the neighbourhood of the zero saddle point even for a standard deviation $STD = 10^{-6}$. The algorithm needs more iterations to escape from the saddle point for some starting points, but the optimum is still achieved. The algorithm may converge to some local minimum if the starting point is far from the global minimum.

\begin{figure}[h]
    \begin{minipage}{0.5\textwidth}
     \frame{\includegraphics[width=0.95\linewidth]{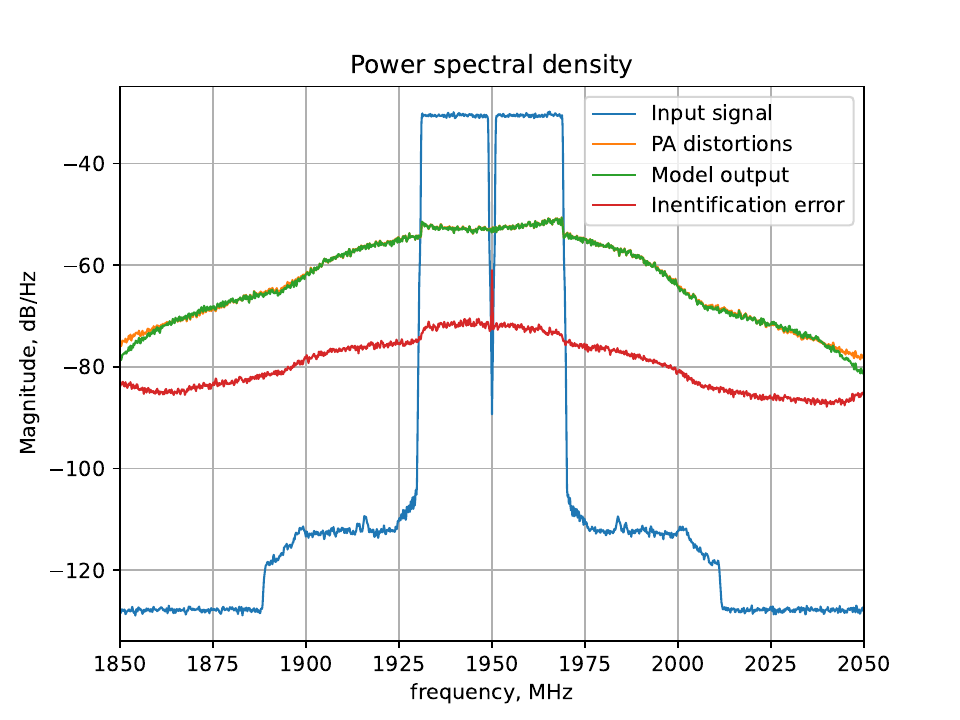}}
     \caption{Nonlinear power amplifier distortions identification}\label{hamm_psd}
   \end{minipage}
   \begin{minipage}{0.5\textwidth}
     \frame{\includegraphics[width=0.95\linewidth]{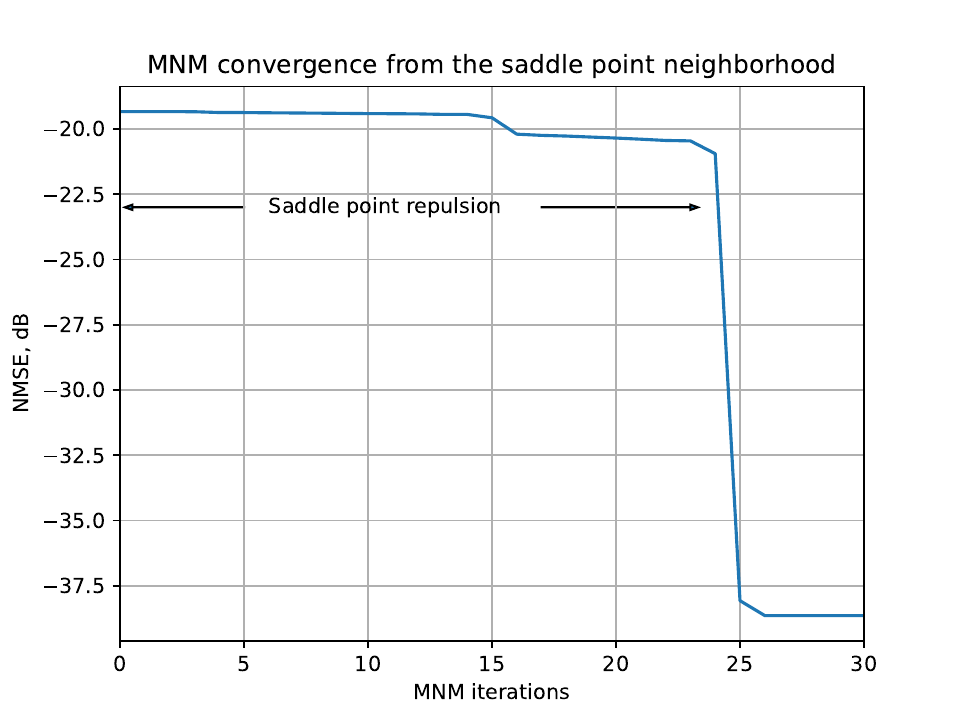}}
     \caption{MNM convergence from the saddle point neighbourhood}\label{hamm_conv}
   \end{minipage}
\end{figure}

The MNM convergence process is shown on Fig.~\ref{hamm_conv} for starting from the saddle point neighbourhood with $STD = 10^{-6}$. Observe that during the first 22 \ldots 24 iterations the MNM escapes from the saddle point and after this converges to the global minimum quadratically during 3 \ldots 5 steps.

\section{Conclusion} \label{sec:conclusion}

In this work, we propose and study an alternative to the Newton method for minimization of functions of complex variables $z$ of the form \eqref{f_definition}. Instead of using the full complex Hessian, only the mixed derivative with respect to $z,\bar z$ is computed at each iteration. The method is less demanding computationally and has superior global convergence properties, both theoretically and in practice. In particular, its iterates are attracted to minima, while they are repelled from saddle points. 

While the Newton method builds and minimizes a quadratic Taylor approximation of the objective, in the MNM builds the quadratic approximation by linearizing the holomorphic functions $g_j$ in \eqref{f_definition}. For the case of one scalar holomorphic function, the method reduces to the well-known complex Newton method for the search of zeros and exhibits a fractal structure of the attraction basins.

We illustrate the properties of the method with a number of examples. For the use in applications in wireless communications, a degeneracy problem arises which, as we show, can be surmounted by a regularization procedure.

\appendix

\section{Wirtinger derivatives} \label{sec:Wirtinger}

In this section, we provide basic notions on Wirtinger derivatives and gradients and Hessians of functions in the complex variables $z,\bar z$. For further information we refer to \cite[Section 1.4]{Remmert}. A more detailed review of the application to optimization can be found in \cite[Section 2]{SorberBarelLathauwer12}.

Consider a real-valued function $f(z)$ of a complex vector-valued variable $z \in D \subset \mathbb C^n$. The canonical way to compute the derivatives of $f$ is to consider it as a function of $2n$ real-valued variables, organized into two vectors $Re\,z$, $Im\,z$ of dimension $n$ each. This gives rise to the gradient and Hessian
\begin{equation} \label{realGradHess}
\nabla_{\mathbb R}f = \begin{pmatrix} \frac{\partial f}{\partial Re\,z} \\ \frac{\partial f}{\partial Im\,z} \end{pmatrix}, \qquad H_{\mathbb R}f = \begin{pmatrix} \frac{\partial^2 f}{\partial (Re\,z)^2} & \frac{\partial^2 f}{\partial Re\,z \partial Im\,z} \\ \frac{\partial^2 f}{\partial Im\,z \partial Re\,z} & \frac{\partial f}{\partial (Im\,z)^2} \end{pmatrix}.
\end{equation}

The Wirtinger derivatives are obtained by a formal complex-linear change of coordinates. Consider $z = Re\,z + iIm\,z$, $\bar z = Re\,z - iIm\,z$ as new independent coordinates, then we obtain
\begin{equation} \label{cr}
\nabla_{\mathbb C}f = \begin{pmatrix} \frac{\partial f}{\partial z} \\ \frac{\partial f}{\partial \bar z} \end{pmatrix} = \begin{pmatrix} I & iI \\ I & -iI \end{pmatrix}^{-T}(\nabla_{\mathbb R}f), \qquad H_{\mathbb C}f = \begin{pmatrix} \frac{\partial^2f}{\partial z^2} & \frac{\partial^2f}{\partial z\partial\bar z} \\ \frac{\partial^2f}{\partial\bar z\partial z} & \frac{\partial^2f}{\partial\bar z^2} \end{pmatrix} = \begin{pmatrix} I & iI \\ I & -iI \end{pmatrix}^{-T}(H_{\mathbb R}f)\begin{pmatrix} I & iI \\ I & -iI \end{pmatrix}^{-1}.
\end{equation}
Here the coefficient matrix whose inverse pre- and post-multiplies the real derivatives is defined by the Jacobian $\frac{\partial(z,\bar z)}{\partial(Re\,z,Im\,z)}$.

More generally, the Wirtinger derivatives are explicitly defined by the action of the operators
\begin{equation} \label{complex_diff_operators}
\frac{\partial}{\partial z} = \frac12\left( \frac{\partial}{\partial Re\,z} - i\frac{\partial}{\partial Im\,z} \right), \qquad \frac{\partial}{\partial \bar z} = \frac12\left( \frac{\partial}{\partial Re\,z} + i\frac{\partial}{\partial Im\,z} \right),
\end{equation}
where $\frac{\partial}{\partial Re\,z}$, $\frac{\partial}{\partial Im\,z}$ are vectors of $n$ partial derivatives defined with respect to the corresponding coordinates in real space $\mathbb R^{2n} \sim \mathbb C^n$.

In particular, for a real-valued function $f$ it follows that
\begin{equation} \label{Wirtinger_conjugate}
\frac{\partial f}{\partial z} = \overline{\frac{\partial f}{\partial\bar z}},
\end{equation}
i.e., the derivatives $\frac{\partial f}{\partial z},\overline{\frac{\partial f}{\partial\bar z}}$ are complex conjugates of each other. We also have that if $g$ is holomorphic, then
\begin{equation} \label{Wirtinger_holomorphic}
\frac{\partial g}{\partial z} = g', \qquad \frac{\partial g}{\partial\bar z} = 0,
\end{equation}
where $g' = \frac{dg}{dz}$ is the vector of ordinary holomorphic derivatives of $g$. This is a well-known consequence of the Cauchy-Riemann equations linking the partial derivatives of the real and imaginary parts of a holomorphic function. Likewise, for an anti-holomorphic function (i.e., whose complex conjugate is holomorphic) $u$ we have
\begin{equation} \label{Wirtinger_antiholomorphic}
\frac{\partial u}{\partial z} = 0, \qquad \frac{\partial u}{\partial\bar z} = \overline{(\bar u)'}.
\end{equation}

Together with the Leibniz product rule we obtain
\begin{equation} \label{modulus_derivatives}
\frac{\partial|g|^2}{\partial z} = g'\cdot \bar g, \quad \frac{\partial|g|^2}{\partial \bar z} = g \cdot \overline{g'}, \quad \frac{\partial^2|g|^2}{\partial\bar z\partial z} = \overline{g'} \cdot (g')^T
\end{equation}
for the Wirtinger derivatives of the squared modulus of a holomorphic function $g$.

\section{Funding}

The research leading to these results received funding from Huawei Technologies Co. Ltd. and the
Ministry of Science and Higher Education of the Russian Federation (Goszadaniye) project No. FSMG-2024-0011.

\bibliographystyle{plain}
\bibliography{interior_point,fractals,complex}

\end{document}